\newtheorem{teo}{Theorem}[section]
\newtheorem{pro}[teo]{Proposition}
\newtheorem{cor}[teo]{Corollary}
\newtheorem{ex}{Example}[section]
\newcommand{\ze}{\mathbb{Z}}
\newcommand{\er}{\mathbb{R}}
\newcommand{\dst}{\displaystyle}
\newcommand{\nn}{\nonumber}
\newcommand{\nid}{\noindent }
\newcommand{\mb}{\mathbf }
\newcommand{\ea}{\emph{et al.}}
\newcommand{\es}{\left}
\newcommand{\di}{\right}
\newcommand{\mei}{multivariate extremal index}
\begin{document}

\title{Dissecting the multivariate extremal index and tail dependence}
\author{Helena Ferreira}
\affil{Universidade da Beira Interior, Centro de Matem\'{a}tica e Aplica\c{c}\~oes (CMA-UBI), Avenida Marqu\^es d'Avila e Bolama, 6200-001 Covilh\~a, Portugal\\ \texttt{helena.ferreira@ubi.pt}}

\author{Marta Ferreira}
\affil{Center of Mathematics of Minho University\\ Center for Computational and Stochastic Mathematics of University of Lisbon \\
Center of Statistics and Applications of University of Lisbon, Portugal\\ \texttt{msferreira@math.uminho.pt} }

\date{}

\maketitle

\abstract{A central issue in the theory of extreme values focuses on suitable conditions such that the well-known results for the limiting distributions of the maximum of i.i.d. sequences can be applied to stationary ones.
In this context, the extremal index appears as a key parameter to capture the effect of temporal dependence on the limiting distribution of the maxima. The  multivariate extremal index corresponds to a generalization of this concept to a multivariate context and affects the tail dependence structure within the marginal sequences and between them. As it is a function, the inference becomes more difficult, and it is therefore important to obtain characterizations, namely bounds based on the marginal dependence that are easier to estimate. In this work we present two decompositions that emphasize different types of information contained in the multivariate extremal index, an upper limit better than those found in the literature and we analyze its role in dependence on the limiting model of the componentwise maxima of a stationary sequence. We will illustrate the results with examples of recognized interest in applications.}\\

\nid\textbf{keywords:} {multivariate extreme values, multivariate extremal index, tail dependence, extremal coefficients, madogram}\\

\nid\textbf{AMS 2000 Subject Classification}: 60G70\\

\section{Introduction}\label{sint}
Let $F$ be a multivariate distribution function (df), with continuous marginal dfs, in the domain of attraction of a multivariate extreme values (MEV) df $\widehat{H}$ having unit Fr\'{e}chet marginals. Thus
\begin{eqnarray}\label{domatrac1}
F^n(nx_1,\hdots,nx_d)\to \widehat{H}(x_1,\hdots,x_d)
\end{eqnarray}
and $\widehat{H}_j(x_j)=\exp(-x_j^{-1})$, $x_j>0$.

Consider $\{\mb{X}_n=(X_{n1},\hdots,X_{nd})\}$ a stationary sequence such that $F_{\mb{X}_n}=F$ and let $\{\mb{M}_n=(M_{n1},\hdots,M_{nd})\}$ be a componentwise maxima sequence generated from $\mb{X}_1,\hdots,\mb{X}_n$ and therefore $M_{nj}=\bigvee_{i=1}^nX_{ij}$, $j=1,\hdots,d$. If
\begin{eqnarray}\label{domatrac2}
\dst\lim_{n\to\infty}P(M_{n1}\leq nx_1,\hdots,M_{nd}\leq nx_d)={H}(x_1,\hdots,x_d),
\end{eqnarray}
for some MEV df $H$, we can relate ${H}(x_1,\hdots,x_d)$ and $\widehat{H}(x_1,\hdots,x_d)$ through the so called multivariate extremal index of $\{\mb{X}_n\}$. This is possible, even if the marginals $\widehat{H}_j$ are not unit Fr\'{e}chet distributed, as considered for simplicity and without loss of generality. Indeed, to have (\ref{domatrac1}) or {\it mutatis mutandis} (\ref{domatrac2}), it is sufficient that, as $n\to\infty$, the sequence of copulas $C_F^n$, with $C_F(u_1,\hdots,u_d)=F(F^{-1}_1(u_1),\hdots,F^{-1}_1(u_d))$, converges to $C_{\widehat{H}}$, as well as, $F_j^n(nx_j)\to \widehat{H}_j(x_j)$, $j=1,\hdots,d$, which can be reduced to the case of convergence to the Fr\'{e}chet without affecting the convergence of $C_F^n$.

We recall the definition of multivariate extremal index of $\{\mb{X}_n\}$ and its role in the relation between ${H}$ and $\widehat{H}$ (Nandagopalan \cite{nand94} 1994). The sequence $\{\mb{X}_n\}$ has multivariate extremal index $\theta(\pmb{\tau})\in(0,1]$, $\pmb{\tau}=(\tau_1,\hdots,\tau_d)\in\er^d_+$, when for each $\pmb{\tau}$ there is a sequence of real levels $\{\mb{u}_n^{(\pmb{\tau})}=(u_{n1}^{(\tau_1)},\hdots, u_{nd}^{(\tau_d)})\}$ satisfying
\begin{eqnarray}\nn
\begin{array}{c}
\dst nP(X_{1j}>u_{nj}^{(\tau_j)})\to \tau_j,\,j\in D=\{1,\hdots,d\},\\\\
\dst P(\mb{\widehat{M}}_n\leq \mb{u}_n^{(\pmb{\tau})})\to\widehat{\gamma}(\pmb{\tau}) \textrm{ and}\\\\
\dst P(\mb{{M}}_n\leq \mb{u}_n^{(\pmb{\tau})})\to{\gamma}(\pmb{\tau})=(\widehat{\gamma}(\pmb{\tau}))^{\theta(\pmb{\tau})},
\end{array}
\end{eqnarray}
where $\mb{\widehat{M}}_n=(\widehat{M}_{n1},\hdots,\widehat{M}_{nd})$, $\widehat{M}_{nj}=\bigvee_{i=1}^n\widehat{X}_{ij}$, $j=1,\hdots,d$, and $\{\mb{\widehat{X}}_n\}$ is a sequence of independent vectors such that $F_{\mb{\widehat{X}}_n}=F_{\mb{X}_n}$.

Observe that
\begin{eqnarray}\nn
\begin{array}{c}
\dst \widehat{\gamma}(\pmb{\tau})=\exp\es(-\lim_{n\to\infty} nP(\mb{X}_1\not\leq\mb{u}_n)\di)
=\exp\es(-\Gamma(\pmb{\tau})\di),
\end{array}
\end{eqnarray}
with
\begin{eqnarray}\nn
\begin{array}{rl}
\dst \Gamma(\pmb{\tau})=&\dst \lim_{n\to\infty} nP\es(\bigcup_{j=1}^d\{X_{ij}>u_{nj}^{(\tau_j)}\}\di)\\\\
=&\dst \sum_{\emptyset\not= J\subset D}(-1)^{|J|+1}\lim_{n\to\infty}nP\es(\bigcap_{j\in J}\{X_{ij}>u_{nj}^{(\tau_j)}\}\di)\\\\
=&\dst \sum_{\emptyset\not= J\subset D}(-1)^{|J|+1}\Gamma^*_J(\pmb{\tau}_J),
\end{array}
\end{eqnarray}
where
\begin{eqnarray}\nn
\begin{array}{c}
\dst \Gamma^*_J(\pmb{\tau}_J)\equiv \Gamma^*(\tau_j,\,j\in J)=\lim_{n\to\infty}nP\es(\bigcap_{j\in J}\{X_{ij}>u_{nj}^{(\tau_j)}\}\di)
\end{array}
\end{eqnarray}
and, in particular, $\Gamma^*_{\{j\}}(\tau_j)=\tau_j $, $j\in D$. So, to say that $\Gamma(\pmb{\tau})$ exists is equivalent to say that  $\widehat{\gamma}(\pmb{\tau})$ exists and we have
\begin{eqnarray}\nn
\begin{array}{c}
\dst {\gamma}(\pmb{\tau})=\exp\es(-\theta(\pmb{\tau})\Gamma(\pmb{\tau})\di)
=\exp\es(-\theta(\pmb{\tau})\sum_{\emptyset\not= J\subset D}(-1)^{|J|+1}\Gamma^*_J(\pmb{\tau}_J)\di),
\end{array}
\end{eqnarray}

If $\{\mb{X}_n\}$ has multivariate extremal index $\theta(\pmb{\tau})$ then any sequence of subvectors $\{(\mb{X}_n)_A\}$ with indexes in $A\subset\{1,\hdots,d\}$ has multivariate extremal index $\theta_A(\pmb{\tau}_A)$, with
\begin{eqnarray}\nn
\begin{array}{c}
\dst \theta_A(\pmb{\tau}_A)=\lim_{\substack{\tau_i\to 0^+\\ i\not\in A}}\theta(\tau_1,\hdots,\tau_d),\,\pmb{\tau}_A\in\er_+^{|A|}\,.
\end{array}
\end{eqnarray}
In particular, for each $j=1,\hdots,d$, $\{X_{nj}\}_{n\geq 1}$ has extremal index $\theta_j$.

If $\theta(\pmb{\tau})$, $\pmb{\tau}\in\er_+^{d}$, exists for $\{\mb{X}_n\}$ we have
\begin{eqnarray}\label{HeHchap}
\begin{array}{c}
\dst H(x_1,\hdots,x_d)=\widehat{H}(x_1,\hdots,x_d)^{\theta(-\log \widehat{H}_1(x_1),\hdots,-\log \widehat{H}_d(x_d))}
\end{array}
\end{eqnarray}
and $H_j(x_j)=\widehat{H}_j(x_j)^{\theta_j}$, $j\in D$.

From inequalities
\begin{eqnarray}\nn
\begin{array}{c}
\dst \prod_{j=1}^d\widehat{H}_j(x_j)^{\theta_j}\leq \widehat{H}(x_1,\hdots,x_d)^{\theta(\tau_1(x_1),\hdots,\tau_d(x_d))}\leq \min_{j=1,\hdots,d}\widehat{H}_j(x_j)^{\theta_j},
\end{array}
\end{eqnarray}
we obtain
\begin{eqnarray}\label{IEMlims}
\begin{array}{c}
\dst \frac{\bigvee_{j=1}^d\theta_j\tau_j}{\Gamma(\pmb{\tau})}\leq \theta(\pmb{\tau})\leq \frac{\sum_{j=1}^d\theta_j\tau_j}{\Gamma(\pmb{\tau})}\,.
\end{array}
\end{eqnarray}

Besides the relation between $H$ and $\widehat{H}$, $\theta(\pmb{\tau})$ also informs about the existence of clustering of events ``at least some exceedance of $u_{nj}^{(\tau_j)}$ by $X_{nj}$, for some $j$", since
\begin{eqnarray}\label{IEMclustmean}
\begin{array}{c}
\dst \frac{1}{\theta(\pmb{\tau})}=\dst\lim_{n\to\infty}E\es(\sum_{i=1}^{r_n}\mathds{1}_{\{\mb{X}_i\not\leq \mb{u}_n^{(\pmb{\tau})}\}}|\sum_{i=1}^{r_n}\mathds{1}_{\{\mb{X}_i\not\leq \mb{u}_n^{(\pmb{\tau})}\}}>0\di)\,,
\end{array}
\end{eqnarray}
for sequences $r_n=[n/k_n]$ and $k_n=o(n)$ provided that  $\{\mb{X}_n\}$ satisfies condition strong-mixing.\\

The multivariate extremal index thus preserves, with the natural adaptations, the characteristics that made famous the univariate extremal index. Additionally to these similar characteristics to the univariate extremal index, it plays an unavoidable role in the tail dependence characterization of $H$. If the tail dependence coefficients applied to $F$ remain unchanged when applied to $\widehat{H}$ (Li \cite{li09} 2009), we can not guarantee the same for $H$, as will be seen in Section \ref{sIEMtaildep}. The presence of serial dependence within each marginal sequence and between marginal sequences, makes it impossible to approximate the dependence coefficients in the tail of $\mb{M}_n$ to those of $F$.

The dependence modeling between the marginals of $F$ has received considerably more attention in literature than the dependence between the marginals of $F_{\mb{M}_n}$, which differs from $F_{\mb{\widehat{M}}_n}=F^n$ for being affected by $\theta(\pmb{\tau})$. The need to characterize this dependence appears, for instance, when we have a random field $\{\mb{X}_{\mb{i},n}, \mb{i}\in \ze^2,\, n\geq 1\}$ and we consider random vectors $(X_{i_1,n},\hdots,X_{i_s,n})$ corresponding to locations $(i_1,\hdots,i_s)$ at time instant $n$. The sequence $\{(X_{i_1,n},\hdots,X_{i_s,n})\}_{n\geq 1}$ presents in general a multivariate extremal index $\theta_{i_1,\hdots,i_s}(\pmb{\tau})$ encompassing information about dependence in the space of locations $i_1,\hdots,i_s$ and when the time $n$ varies (Ferreira \ea~\cite{fer+15} 2016). Relation (\ref{HeHchap}) applied to MEV distributions $\widehat{H}$ and functions $\theta(x_1,\hdots,x_d)$ compatible with the properties of a multivariate extremal index, provide a means of constructing MEV distributions (Martins and Ferreira \cite{mar+fer05} 2005).

Notwithstanding all these challenges posed by and for the multivariate extremal index, the literature proves that it remained on the theoretical shelves of the study of extreme values.

The main difficulty of applying the multivariate extremal index lies in the fact that it is a function, unlike what happens with the marginal univariate extremal indexes, for which we have several estimation methods in the literature (see, e.g., Gomes \ea~\cite{gom+08} 2008, Northrop \cite{nor15} 2015, Ferreira and Ferreira \cite{fer+fer16} 2016 and references therein).

Since it remains present the need to estimate the propensity for clustering in a context of multivariate sequences, we propose in this work: (a) decompose it, highlighting different types of information contained in it; (b) bound it in order to obtain a better upper limit than those available in the literature; (c)  enhance its role in the dependence of the tail of $H$; (d) apply it to models of recognized interest in applications.

\section{Co-movements point processes}\label{scomovpp}

Based on (\ref{IEMclustmean}) the \mei~can be seen as the number of the limiting mean dimension of clustering of events counted by the point process
\begin{eqnarray}\nn
N_n=\sum_{i=1}^n\mathds{1}_{\{\mb{X}_i\not\leq \mb{u}_n^{(\pmb{\tau})}\}}.
\end{eqnarray}

We are going to consider two point processes of more restricted events, corresponding to joint exceedances for various marginals of  $\mb{X}_i$ and enhance the contribution of the extremal indexes of these events in the value of $\theta(\pmb{\tau})$.

Let, for each $\emptyset\not=J\subset D=\{1,\hdots,d\}$,
\begin{eqnarray}\nn
N_{n,J}^{*}=\sum_{i=1}^n\mathds{1}_{\{\bigcap_{j\in J} \{X_{ij}>u_{nj}\}\}}, \, n\geq 1,
\end{eqnarray}
and
\begin{eqnarray}\nn
N_{n,J}^{**}=\sum_{i=1}^n\mathds{1}_{\{\bigwedge_{j\in J}X_{ij}>\bigvee_{j\in J}u_{nj}\}}, \, n\geq 1,
\end{eqnarray}
where notations $\wedge$ and $\vee$ stand for minimum and maximum, respectively.

We denote the respective limiting mean number of occurrences by
\begin{eqnarray}\nn
\Gamma_{J}^{*}(\pmb{\tau}_J)=\lim_{n\to\infty}n\dst P\es(\bigcap_{j\in J} \{X_{ij}>u_{nj}\}\di)
\end{eqnarray}
and
\begin{eqnarray}\nn
\Gamma_{J}^{**}(\pmb{\tau}_J)=\lim_{n\to\infty}n\dst P\es(\bigcap_{j\in J} \{X_{ij}>\bigvee_{j\in J}u_{nj}\}\di).
\end{eqnarray}
Observe that
\begin{eqnarray}\nn
\Gamma_{J}^{**}(\pmb{\tau}_J)=\lim_{n\to\infty}nP\es(\dst \bigcap_{j\in J} \es\{X_{ij}>\frac{n}{\bigwedge_{j\in J}\tau_{j}}\di\}\di).
\end{eqnarray}
Thus
\begin{eqnarray}\nn
\Gamma_{J}^{**}(\pmb{\tau}_J)=\tau^{**}_J\es(\bigwedge_{j\in J}\tau_{j}\di),
\end{eqnarray}
with $\tau^{**}_J$ an increasing function in $\bigwedge_{j\in J}\tau_{j}$ and homogeneous of order $1$. Therefore, we have
\begin{eqnarray}\label{reltau**}
\tau^{**}_J\es(\frac{\bigwedge_{j\in J}\tau_{j}}{s}\di)=\frac{\tau^{**}_J\es(\bigwedge_{j\in J}\tau_{j}\di)}{s},
\end{eqnarray}
for all $s\not=0$, a relation that will be fundamental for the independence of $\theta^{**}$ from $\tau$.

In case $J=D$, we will omit the index $J$ in notation.

For each of these processes, we can define an index of clustering of occurrences, which we will also call extremal indexes, $\theta^{*}_J\es(\pmb{\tau}_J\di)$ and $\theta^{**}_J\es(\pmb{\tau}_J\di)$, being the latter a constant independent of $\pmb{\tau}_J$, as we will see.

Let us assume that sequence $\{\mb{X}_n\}_{n\geq 1}$ satisfies the strong-mixing condition (Leadbetter \ea~\cite{lead+83} 1983) and, as consequence, we have, as $n\to\infty$,
\begin{eqnarray}\nn
P\es(N_{n,J}^{}=0\di)-P^{k_n}\es(N_{[n/k_n],J}^{}=0\di)\to 0,
\end{eqnarray}
\begin{eqnarray}\nn
P\es(N_{n,J}^{*}=0\di)-P^{k_n}\es(N_{[n/k_n],J}^{*}=0\di)\to 0
\end{eqnarray}
and
\begin{eqnarray}\nn
P\es(N_{n,J}^{**}=0\di)-P^{k_n}\es(N_{[n/k_n],J}^{**}=0\di)\to 0,
\end{eqnarray}
for any integers sequence $\{k_n\}$, such that, $k_n\to\infty$, $k_n\alpha_n(l_n)\to 0$ and $k_nl_n/n\to 0$, as $n\to\infty$, where $\alpha_n(\cdot)$ and $l_n$ are the sequences of the strong-mixing condition. Thus
\begin{eqnarray}\nn
P\es(N_{n,J}^{}=0\di)\to \exp\es(-\theta_J(\pmb{\tau}_J)\Gamma_J(\pmb{\tau}_J)\di),
\end{eqnarray}
\begin{eqnarray}\nn
P\es(N_{n,J}^{*}=0\di)\to \exp\es(-\theta_J^{*}(\pmb{\tau}_J)\Gamma_J^{*}(\pmb{\tau}_J)\di)
\end{eqnarray}
and
\begin{eqnarray}\label{N**lim}
P\es(N_{n,J}^{**}=0\di)\to \exp\es(-\theta_J^{**}(\pmb{\tau}_J)\tau^{**}_J\es(\bigwedge_{j\in J}\tau_{j}\di)\di),
\end{eqnarray}
with
\begin{eqnarray}\nn
\theta_J(\pmb{\tau}_J)=\lim_{n\to\infty}k_nP\es(N_{[n/k_n],J}^{}>0\di)/\Gamma_J(\pmb{\tau}_J),
\end{eqnarray}
\begin{eqnarray}\nn
\theta_J^{*}(\pmb{\tau}_J)=\lim_{n\to\infty}k_nP\es(N_{[n/k_n],J}^{*}>0\di)/\Gamma_J^{*}(\pmb{\tau}_J),
\end{eqnarray}
\begin{eqnarray}\nn
\theta_J^{**}(\pmb{\tau}_J)=\lim_{n\to\infty}k_nP\es(N_{[n/k_n],J}^{**}>0\di)/\tau^{**}_J\es(\bigwedge_{j\in J}\tau_{j}\di)
\end{eqnarray}
and
\begin{eqnarray}\nn
\theta_J^{**}(\pmb{\tau}_J)\tau^{**}_J\es(\bigwedge_{j\in J}\tau_{j}\di)\leq \theta_J^{*}(\pmb{\tau}_J)\Gamma_J^{*}(\pmb{\tau}_J)\leq \bigvee_{j\in J}\theta_j\tau_j\leq \theta_J(\pmb{\tau}_J)\Gamma_J(\pmb{\tau}_J).
\end{eqnarray}
In the following we present relations between $\theta_J^{**}(\pmb{\tau}_J)$, $\theta_J^{*}(\pmb{\tau}_J)$ and $\theta_J^{}(\pmb{\tau}_J)$, which will allow us a detailed interpretation of the information contained in $\theta(\pmb{\tau})$ and an upper bound better than the one in (\ref{IEMlims}). But first, we start by proving that $\theta^{**}_J(\pmb{\tau}_J)=\theta^{**}_J$, i.e., these extremal indexes are independent of $\pmb{\tau}$, which is already known for $J=\{j\}$ (Leadbetter \ea~\cite{lead+83} 1983), $j=1,\hdots,d$, since $\theta^{**}_{\{j\}}=\theta_j$. Indeed the proof runs along the same lines.

\begin{pro}\label{p1}
For stationary sequences $\{\mb{X}_n\}$ satisfying the strong-mixing condition, if there exists the limit (\ref{N**lim}) for some $\pmb{\tau}$, then it exists for any $\pmb{\tau}>0$ and we have
\begin{eqnarray}\nn
P\es(N_{n,A}^{**}=0\di)\to \exp\es(-\theta_A^{**}\tau^{**}_A\es(\bigwedge_{j\in A}\tau_{j}\di)\di),
\end{eqnarray}
with $\theta^{**}_A\in[0,1]$ constant.
\end{pro}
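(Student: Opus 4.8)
\dem The plan is to recognise $N_{n,A}^{**}$ as an \emph{ordinary} univariate exceedance counter and then to invoke, with the natural adaptations, the classical fact that the extremal index of a strong-mixing stationary sequence, once it exists for one admissible level sequence, exists for every admissible level sequence and equals a constant (Leadbetter \ea~\cite{lead+83}). Concretely, put $Y_i:=\bigwedge_{j\in A}X_{ij}$ and $v_n(\pmb{\tau}_A):=\bigvee_{j\in A}u_{nj}^{(\tau_j)}$; then $\{N_{n,A}^{**}=0\}=\{\bigvee_{i=1}^nY_i\le v_n(\pmb{\tau}_A)\}$, so (\ref{N**lim}) asserts precisely that the stationary sequence $\{Y_i\}$ has a univariate extremal index $\theta_A^{**}(\pmb{\tau}_A)$ at the level sequence $v_n(\pmb{\tau}_A)$, whose limiting exceedance mean is $nP(Y_1>v_n(\pmb{\tau}_A))\to\Gamma_A^{**}(\pmb{\tau}_A)=\tau_A^{**}(\bigwedge_{j\in A}\tau_j)$. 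Since $\tau_A^{**}$ is homogeneous of order $1$ on $(0,\infty)$ it is linear, $\tau_A^{**}(\sigma)=c_A\sigma$ with $c_A=\tau_A^{**}(1)$; if $c_A=0$ the limit in (\ref{N**lim}) is $1$ for every $\pmb{\tau}$ and there is nothing to prove, so I assume $c_A>0$.

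First I would check that $\{Y_i\}$ inherits the hypotheses: being a measurable image of $\{\mb{X}_n\}$, it is stationary with strong-mixing coefficients no larger than those of $\{\mb{X}_n\}$, hence it satisfies the strong-mixing condition with the same $\alpha_n(\cdot)$ and $l_n$, and for every admissible $\{k_n\}$ one still has $P(\bigvee_{i=1}^nY_i\le v_n)-P^{k_n}(\bigvee_{i=1}^{[n/k_n]}Y_i\le v_n)\to 0$. Next I would use the homogeneity relation (\ref{reltau**}): as $\pmb{\tau}_A$ ranges over $\er_+^{|A|}$, $\bigwedge_{j\in A}\tau_j$ ranges over all of $(0,\infty)$, so the family $\{v_n(\pmb{\tau}_A):\pmb{\tau}_A\in\er_+^{|A|}\}$ realises \emph{every} admissible level sequence for $\{Y_i\}$. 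Moreover, writing $\sigma_0=\bigwedge_{j\in A}\tau_j$, $\sigma=\bigwedge_{j\in A}\tau'_j$ and $m_n=[n\sigma_0/\sigma]$, the level sequences $v_n(\pmb{\tau}'_A)$ and $v_{m_n}(\pmb{\tau}_A)$, used along $n$ terms of $\{Y_i\}$, produce the same limiting exceedance mean $c_A\sigma$; by the standard fact that the limiting law of $\bigvee_{i\le n}Y_i$ along a level sequence depends only on that mean, the two may be interchanged inside $P(\bigvee_{i\le n}Y_i\le\cdot\,)$.

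Then I would carry out the transfer argument on $\{Y_i\}$. Assuming (\ref{N**lim}) for a fixed $\pmb{\tau}$, i.e. $P(\bigvee_{i=1}^nY_i\le v_n(\pmb{\tau}_A))\to e^{-\theta c_A\sigma_0}$ with $\theta:=\theta_A^{**}(\pmb{\tau}_A)$, the block approximation first yields $k_nP(\bigvee_{i=1}^{[n/k_n]}Y_i>v_n(\pmb{\tau}_A))\to\theta c_A\sigma_0$, and then, by the usual sub-additivity and block-counting, the fractional-power relation $P(\bigvee_{i=1}^{m'}Y_i\le v_m(\pmb{\tau}_A))\to e^{-\lambda\theta c_A\sigma_0}$ whenever $m'/m\to\lambda$, first for rational $\lambda>0$ and then for all real $\lambda>0$ by monotone interpolation. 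Applying this with $m=m_n$, $m'=n$, $\lambda=\sigma/\sigma_0$ and using the level-interchange above, I obtain, for an arbitrary $\pmb{\tau}'_A>0$, $P(\bigvee_{i=1}^nY_i\le v_n(\pmb{\tau}'_A))\to e^{-(\sigma/\sigma_0)\theta c_A\sigma_0}=e^{-\theta c_A\sigma}=e^{-\theta\,\tau_A^{**}(\bigwedge_{j\in A}\tau'_j)}$. Hence (\ref{N**lim}) holds for every $\pmb{\tau}>0$ with $\theta_A^{**}(\pmb{\tau}'_A)=\theta$, a constant $\theta_A^{**}$; it lies in $[0,1]$ because $\theta_A^{**}\ge 0$ is trivial while $k_nP(N_{[n/k_n],A}^{**}>0)\le k_n[n/k_n]P(Y_1>v_n)\le nP(Y_1>v_n)\to c_A\bigwedge_{j\in A}\tau_j$ forces $\theta_A^{**}\le 1$.

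The step I expect to require genuine work is the fractional-power relation for non-integer $\lambda$: one must choose block lengths and spacer lengths compatible \emph{simultaneously} with $m$ and $m'$ under the given mixing rates, handle the boundary blocks, and then squeeze between rational values of $\lambda$. Since $\{Y_i\}$ is a genuine univariate stationary strong-mixing sequence, this is exactly the argument underlying the $\pmb{\tau}$-independence of the univariate extremal index in Leadbetter \ea~\cite{lead+83}, which therefore applies here essentially verbatim; all the remaining steps above are bookkeeping with the homogeneity relation (\ref{reltau**}). \fdem
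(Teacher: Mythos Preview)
Your proposal is correct and follows essentially the same route as the paper: both rest on the classical Leadbetter argument that the univariate extremal index, once it exists for one admissible level sequence, exists for all of them and is a constant. You make the reduction to the univariate sequence $Y_i=\bigwedge_{j\in A}X_{ij}$ explicit and then invoke (or re-sketch) that result, whereas the paper re-derives the Leadbetter steps directly in the multivariate notation---the lower bound on $\liminf$, the functional equation $\Psi(\tau^{**}_A/k)=\Psi(\tau^{**}_A)^{1/k}$, and monotonicity forcing the exponential form---but the underlying argument is identical.
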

\begin{proof}
From the strong-mixing condition, we have
\begin{eqnarray}\nn
\begin{array}{rl}
&\dst\liminf_{n\to\infty} P\es(N_{n,A}^{**}=0\di)= \dst\liminf_{n\to\infty} P^{k_n}\es(N_{[n/k_n],A}^{**}=0\di)= \dst\liminf_{n\to\infty}\es(1-\frac{k_n P\es(N_{[n/k_n],A}^{**}>0\di)}{k_n}\di)^{k_n}\\\\
\geq & \dst\liminf_{n\to\infty}\es(1-\frac{n P\es(\bigwedge_{j\in A} X_{1j}>\bigvee_{j\in A}u_{nj}\di)}{k_n}\di)^{k_n}= \dst \es(1-\frac{\tau^{**}_A\es(\bigwedge_{j\in A} \di)}{k_n}\di)^{k_n}.
\end{array}
\end{eqnarray}
Thus, if there exists $\Psi(\tau^{**}_A)=\limsup_{n\to\infty}P\es(N_{n,A}^{**}=0\di)$, we have $\Psi\es(\tau^{**}_A\es(\bigwedge_{j\in A} \di)\di)\geq \exp\es(-\tau^{**}_A\es(\bigwedge_{j\in A} \di)\di)$, and so $\Psi(\tau^{**}_A)$ is a strictly positive function.

We also have that function $\Psi(\tau^{**}_A)$ would have to satisfy  $\Psi(\tau^{**}_A/k)=\Psi^{1/k}(\tau^{**}_A)$, for all $\tau^{**}_A>0$ and $k=1,2,\hdots$, since, representing $\sum_{i=1}^n\mathds{1}_{\{\bigwedge_{j\in A} X_{ij}>m/\bigwedge_{j\in A}\tau_j\}}$ by $N_n^{**}\es(\mb{u}_m^{\es(\tau^{**}_A(\bigwedge_{j\in A} \tau_j)\di)}\di)$ and applying (\ref{N**lim}), it holds
\begin{eqnarray}\nn
\begin{array}{rl}
&\dst \es|P\es(N_{[n/k_n],A}^{**}\es(\mb{u}_n^{\es(\tau^{**}_A(\bigwedge_{j\in A} \tau_j)\di)}\di)=0\di)
-P\es(N_{[n/k_n],A}^{**}\es(\mb{u}_{[n/k_n]}^{\es(\tau^{**}_A(\bigwedge_{j\in A} \tau_j)/k_n\di)}\di)=0\di)\di|\\\\
\leq & \dst \es[\frac{n}{k_n}\di]\es|P\es(\bigwedge_{j\in A}X_{1j}>\frac{n}{\bigwedge_{j\in A}\tau_j}\di)
-P\es(\bigwedge_{j\in A}X_{1j}>\frac{[n/k_n]}{\bigwedge_{j\in A}\tau_j/k_n}\di)\di|\\\\
= &\dst \es[\frac{n}{k_n}\di]\es|\frac{\bigwedge_{j\in A}\tau_j}{n}(1+o(1))
- \frac{\bigwedge_{j\in A}\tau_j/k_n}{[n/k_n]}(1+o(1))\di|=o(1)
\end{array}
\end{eqnarray}
and thus we would have
\begin{eqnarray}\nn
\begin{array}{rl}
&\dst \Psi\es(\frac{\tau^{**}_A}{k_n}\di)
=\limsup_{n\to\infty}P\es(N^{**}_{[n/k_n],A}\es(\mb{u}_{[n/k_n],A}^{(\tau^{**}_A/k_n)}\di)=0\di)
=\limsup_{n\to\infty}P\es(N^{**}_{n,A}\es(\mb{u}_{n,A}^{(\tau^{**}_A)}\di)=0\di)
=\Psi\es(\tau^{**}_A\di)^{1/k_n}.
\end{array}
\end{eqnarray}
On the other hand, $\Psi\es(\tau^{**}_A\di)$ would have to be a non increasing function because if
\begin{eqnarray}\nn
\begin{array}{rl}
&\dst\tau^{**}_{0,A}\es(\bigwedge_{j\in A}\tau_{0,j}\di)
=\lim_{n\to\infty}n P\es(\bigwedge_{j\in A} X_{1j}>\frac{n}{\bigwedge_{j\in A}\tau_{0,j}}\di)
>\tau^{**}_{A}\es(\bigwedge_{j\in A}\tau_{j}\di)=\lim_{n\to\infty}n P\es(\bigwedge_{j\in A} X_{1j}>\frac{n}{\bigwedge_{j\in A}\tau_{j}}\di)
\end{array}
\end{eqnarray}
and $\tau^{**}_{A}\es(\bigwedge_{j\in A}\tau_{j}\di)$ is increasing in $\bigwedge_{j\in A}\tau_{j}$, then from some order,
\begin{eqnarray}\nn
\begin{array}{rl}
&\dst\es\{\bigwedge_{j\in A}X_{1j}>\frac{n}{\bigwedge_{j\in A}\tau_j}\di\}\subset
\es\{\bigwedge_{j\in A}X_{1j}>\frac{n}{\bigwedge_{j\in A}\tau_{0,j}}\di\}
\end{array}
\end{eqnarray}
and thus
\begin{eqnarray}\nn
\begin{array}{rl}
&\dst\es\{N^{**}_{n,A}\es(\mb{u}_n^{\es(\tau^{**}_{0,A}\di)}\di)=0\di\}\subset
\es\{N^{**}_{n,A}\es(\mb{u}_n^{\es(\tau^{**}_{A}\di)}\di)=0\di\}
\end{array}
\end{eqnarray}
and $\Psi\es(\tau^{**}_{0,A}\di)\leq \Psi\es(\tau^{**}_{A}\di)$. If $\Psi\es(\tau^{**}_{A}\di)$ is a strictly positive function, non increasing and such that $\Psi\es(\tau^{**}_{A}/k\di)=\Psi\es(\tau^{**}_{A}\di)^{1/k}$, then $\Psi\es(\tau^{**}_{A}\di)=\exp\es(-\theta^{**}_A\tau^{**}_A\di)$, with $\theta^{**}_A$ a non negative constant. Since $\Psi\es(\tau^{**}_{A}\di)>\exp\es(-\tau^{**}_A\di)$, it also comes $\theta^{**}_A\leq 1$. For the lower limit, we can make the same reasoning to obtain the result.
\end{proof}

Let us start by emphasizing that, to $\theta(\pmb{\tau})\Gamma(\pmb{\tau})$, we have the contribution of the clustering of the joint exceedances of all levels by the respective marginals, including the particular case of the clustering of exceedances of the largest level by the lower marginal, as well as, the clustering of exceedances of one or more levels by the respective marginals without joint exceedances of all levels.\\

\begin{pro}\label{p2}
Let $\{\mb{X}_n\}$ be a stationary sequence satisfying the strong-mixing condition and $\{\mb{u}_n^{(\pmb{\tau})}=(u_{n}^{(\tau_1)},\hdots,u_{n}^{(\tau_d)})\}$ a sequence of normalized real levels for which there exists $\Gamma(\pmb{\tau})$. Then
$$\theta(\pmb{\tau})\Gamma(\pmb{\tau})=\theta^{**}\tau^{**}\es(\bigwedge_{j=1}^d\tau_j\di)
    +\theta^{*}(\pmb{\tau})\Gamma^{*}(\pmb{\tau})\beta ^{(1)}(\pmb{\tau})+\sum_{\emptyset\not=J\subset D}(-1)^{|J|+1}\Theta_J(\pmb{\tau}_J),$$
    where
    $$\beta ^{(1)}(\pmb{\tau})=\lim_{n\to\infty}P\es(N^{**}_{r_n}=0|N^{*}_{r_n}>0\di)$$ and

    $$\Theta_J(\pmb{\tau}_J)=\lim_{n\to\infty}k_nP\es(\bigcap_{j\in J}\{N^{}_{r_n,\{j\}}>0\}|N^{*}_{r_n}=0\di).$$

\end{pro}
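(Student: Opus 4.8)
The plan is to run, simultaneously, the standard blocking argument behind the extremal index for the three nested counting processes $N_{r_n}$, $N_{r_n}^{*}$ and $N_{r_n}^{**}$ (with $r_n=[n/k_n]$), and then to split the event $\{N_{r_n}>0\}$ along that nesting; each resulting probability, after multiplication by $k_n$, will converge to one of the three terms in the asserted identity. First I would record, from the discussion preceding the proposition, that the strong-mixing condition together with the existence of $\Gamma(\pmb{\tau})$ yields $k_nP\es(N_{r_n}>0\di)\to\theta(\pmb{\tau})\Gamma(\pmb{\tau})$ and $k_nP\es(N_{r_n}^{*}>0\di)\to\theta^{*}(\pmb{\tau})\Gamma^{*}(\pmb{\tau})$, while Proposition \ref{p1} gives $k_nP\es(N_{r_n}^{**}>0\di)\to\theta^{**}\tau^{**}\es(\bigwedge_{j=1}^{d}\tau_j\di)$; moreover $P\es(N_{r_n}^{*}>0\di)\leq r_nP\es(\bigcap_{j\in D}\{X_{1j}>u_{nj}\}\di)\to 0$, so $P\es(N_{r_n}^{*}=0\di)\to 1$.

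Next I would exploit the elementary pointwise inclusions $\{\bigwedge_{j\in D}X_{ij}>\bigvee_{j\in D}u_{nj}\}\subset\bigcap_{j\in D}\{X_{ij}>u_{nj}\}\subset\{\mb{X}_i\not\leq\mb{u}_n^{(\pmb{\tau})}\}$, which give $\{N_{r_n}^{**}>0\}\subset\{N_{r_n}^{*}>0\}\subset\{N_{r_n}>0\}$ and hence the decomposition
\begin{eqnarray}\nn
P\es(N_{r_n}>0\di)=P\es(N_{r_n}^{**}>0\di)+P\es(N_{r_n}^{*}>0,N_{r_n}^{**}=0\di)+P\es(N_{r_n}>0,N_{r_n}^{*}=0\di).
\end{eqnarray}
Multiplying by $k_n$, the first summand tends to $\theta^{**}\tau^{**}(\bigwedge_{j=1}^{d}\tau_j)$ by Step 1. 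For the second, I would write $P(N_{r_n}^{*}>0,N_{r_n}^{**}=0)=P(N_{r_n}^{*}>0)\,P(N_{r_n}^{**}=0\mid N_{r_n}^{*}>0)$, so that it tends to $\theta^{*}(\pmb{\tau})\Gamma^{*}(\pmb{\tau})\beta^{(1)}(\pmb{\tau})$ by the definitions of $\theta^{*}$ and of $\beta^{(1)}(\pmb{\tau})$.

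For the third summand I would use that an exceedance of the vector level $\mb{u}_n^{(\pmb{\tau})}$ is an exceedance of some marginal level, i.e. $\{N_{r_n}>0\}=\bigcup_{j\in D}\{N_{r_n,\{j\}}>0\}$, and apply inclusion–exclusion inside the probability conditioned on $\{N_{r_n}^{*}=0\}$:
\begin{eqnarray}\nn
P\es(N_{r_n}>0\mid N_{r_n}^{*}=0\di)=\sum_{\emptyset\not=J\subset D}(-1)^{|J|+1}P\es(\bigcap_{j\in J}\{N_{r_n,\{j\}}>0\}\,\Big|\,N_{r_n}^{*}=0\di).
\end{eqnarray}
Multiplying the left side by $k_nP(N_{r_n}^{*}=0)$ and each summand on the right by $k_n$, and using $P(N_{r_n}^{*}=0)\to1$ (so the joint and conditional forms have the same limit) together with the existence of each $\Theta_J(\pmb{\tau}_J)$, one obtains $k_nP(N_{r_n}>0,N_{r_n}^{*}=0)\to\sum_{\emptyset\not=J\subset D}(-1)^{|J|+1}\Theta_J(\pmb{\tau}_J)$. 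Adding the three limits gives the claimed identity.

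The hard part is not any single estimate but the bookkeeping of limits: one needs a single admissible block sequence $\{k_n\}$ along which $k_nP(N_{r_n}>0)$, $k_nP(N_{r_n}^{*}>0)$, $k_nP(N_{r_n}^{**}>0)$ and the conditional quantities defining $\beta^{(1)}$ and the $\Theta_J$ all converge — legitimate since only finitely many processes are involved and the strong-mixing conditions stated before the proposition hold for a common $\{k_n\}$ — and one must justify passing from $P(\,\cdot\,,N_{r_n}^{*}=0)$ to $P(\,\cdot\mid N_{r_n}^{*}=0)$, which rests precisely on $P(N_{r_n}^{*}=0)\to1$. If one does not wish to postulate the $\Theta_J$ a priori, their existence, and the identity $\sum_{\emptyset\not=J\subset D}(-1)^{|J|+1}\Theta_J(\pmb{\tau}_J)=\theta(\pmb{\tau})\Gamma(\pmb{\tau})-\theta^{*}(\pmb{\tau})\Gamma^{*}(\pmb{\tau})$, follow from Step 1 via the inclusion–exclusion identity above, since its left-hand side then converges.
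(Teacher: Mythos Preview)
Your proposal is correct and follows essentially the same route as the paper: decompose $\{N_{r_n}>0\}$ along the nesting $\{N^{**}_{r_n}>0\}\subset\{N^{*}_{r_n}>0\}\subset\{N_{r_n}>0\}$, rewrite the middle piece via conditioning to produce $\beta^{(1)}$, expand the last piece by inclusion--exclusion over the marginal events $\{N_{r_n,\{j\}}>0\}$, and invoke $P(N^{*}_{r_n}=0)\to 1$ to pass between joint and conditional forms. The paper's proof is terser but identical in structure; your added discussion of why a common block sequence $\{k_n\}$ suffices and why the $\Theta_J$ limits exist is extra care not spelled out in the original.
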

\begin{proof}
We have
\begin{eqnarray}\nn
\begin{array}{rl}
k_nP\es(N_{r_n}>0\di)=&\dst k_nP\es(N^{**}_{r_n}>0\di)+k_nP\es(N^{*}_{r_n}>0,N^{**}_{r_n}=0\di)
+k_nP\es(N^{}_{r_n}>0,N^{*}_{r_n}=0\di)\\\\
=& \dst k_nP\es(N^{**}_{r_n}>0\di)+k_nP\es(N^{*}_{r_n}>0\di)P\es(N^{**}_{r_n}=0|N^{*}_{r_n}>0\di)\\
&+k_nP\es(\bigcup_{j=1}^d\{N^{}_{r_n,\{j\}}>0\},N^{*}_{r_n}=0\di)\\\\
=&\dst \dst k_nP\es(N^{**}_{r_n}>0\di)+k_nP\es(N^{*}_{r_n}>0\di)P\es(N^{**}_{r_n}=0|N^{*}_{r_n}>0\di)\\
&+\dst\sum_{\emptyset\not= J\subset D}(-1)^{|J|+1}k_nP\es(\bigcap_{j\in J}\{N^{}_{r_n,\{j\}}>0\},N^{*}_{r_n}=0\di).
\end{array}
\end{eqnarray}
In what concerns the last term, observe that
\begin{eqnarray}\nn
\dst\sum_{j=1}^d k_nP\es(N^{}_{r_n,\{j\}}>0,N^{*}_{r_n}=0\di)=\sum_{j=1}^d k_nP\es(N^{}_{r_n,\{j\}}>0\di)-\sum_{j=1}^d k_nP\es(N^{}_{r_n,\{j\}}>0,N^{*}_{r_n}>0\di)
\end{eqnarray}
and since $\lim_{n\to\infty}P\es(N^{*}_{r_n}=0\di)=1$, we have the result.
\end{proof}

Observe that $\beta ^{(1)}(\pmb{\tau})$ reduces  $\theta^{*}(\pmb{\tau})$ from the joint exceedances of $\bigvee_{j=1}^d n/\tau_j$ accounted for $\theta^{**}$. We can say that in the last term of representation of $\theta(\pmb{\tau})\Gamma(\pmb{\tau})$ we are accounting the clustering propensity concerning one or more marginals, without joint exceedances of all the marginals.

We illustrate the previous result with a bivariate sequence  with unit Fr\'{e}chet marginals and such that the joint tail is regularly varying at $\infty$ with index $\eta\in(0,1]$ measuring a penultimate tail dependence, as the (sub)model presented in Ledford and Tawn (\cite{led+taw96,led+taw97} 1996,1997).

\begin{ex}\label{ex1}
Suppose that $d=2$ and $\{(X_{n1},X_{n2})\}_{n\geq 1}$ is a strong-mixing stationary sequence, with unit Fr\'{e}chet marginals and such that
\begin{eqnarray}\label{etaex1}
P\es(X_{n1}>x,X_{n2}>x\di)\sim x^{-1/\eta}L(x),
\end{eqnarray}
as $x\to\infty$, where $0<\eta<1$ and $L$ is a slowly varying function, i.e., $L(tx)/L(x)\to 1$, $\forall t>0$. Then
\begin{eqnarray}\nn
\theta^{**}=k_nP\es(N^{**}_{r_n}>0\di)\leq nP\es(X_{n1}>\frac{n}{\tau_1\wedge\tau_2},X_{n2}>\frac{n}{\tau_1\wedge\tau_2}\di)\sim n\es(\frac{n}{\tau_1\wedge\tau_2}\di)^{-1/\eta}L\es(\frac{n}{\tau_1\wedge\tau_2}\di)\to 0,
\end{eqnarray}
\begin{eqnarray}\nn
\theta^{*}(\tau_1,\tau_2)\leq nP\es(X_{n1}>\frac{n}{\tau_1},X_{n2}>\frac{n}{\tau_2}\di)\leq nP\es(X_{n1}>\frac{n}{\tau_1\wedge\tau_2},X_{n2}>\frac{n}{\tau_1\wedge\tau_2}\di)\to 0.
\end{eqnarray}
Therefore, regardless of additional conditions on the serial dependence, the validity of (\ref{etaex1}) implies
\begin{eqnarray}\nn
\theta(\pmb{\tau})\Gamma(\pmb{\tau})=\sum_{\emptyset\not=J\subset \{1,2\}}(-1)^{|J|+1}\lim_{n\to\infty}k_nP\es(\bigcap_{j\in J}\{N_{r_n,\{j\}}>0\},N^{*}_{r_n}=0\di)
\end{eqnarray}
and $\Gamma(\pmb{\tau})=\tau_1+\tau_2$. Since $k_nP\es(N^{*}_{r_n}>0\di)\to 0$ we can thus write in this model
\begin{eqnarray}\label{ex1teta1}
\theta(\pmb{\tau})=\frac{1}{\tau_1+\tau_2}\lim_{n\to\infty}k_n\es(P\es(N_{r_n,\{1\}}>0\di)
+P\es(N_{r_n,\{2\}}>0\di)
-P\es(N_{r_n,\{1\}}>0,N_{r_n,\{2\}}>0\di)\di).
\end{eqnarray}

We now consider several particular situations.

(a) In the case of independent vectors $(X_{n1},X_{n2})$, $n\geq 1$, we have
\begin{eqnarray}\nn
\begin{array}{rl}
\theta(\pmb{\tau})=&\dst \frac{1}{\tau_1+\tau_2}\es(\theta_1\tau_1+\theta_2\tau_2-\lim_{n\to\infty}k_nP\es(\bigcup_{1\leq i<i'\leq r_n}\es\{\es\{X_{i1}>\frac{n}{\tau_1},X_{i2}\leq\frac{n}{\tau_2},X_{i'1}\leq\frac{n}{\tau_1}
,X_{i'2}>\frac{n}{\tau_2},\di\}\di.\di.\di.\\\\
&\dst \es.\es.\es.\bigcup
\es\{X_{i1}\leq\frac{n}{\tau_1},X_{i2}>\frac{n}{\tau_2},X_{i'1}>\frac{n}{\tau_1}
,X_{i'2}\leq\frac{n}{\tau_2}\di\}\di\}\di)\di)=\frac{\tau_1+\tau_2}{\tau_1+\tau_2}=1.
\end{array}
\end{eqnarray}
It will then come $P\es(M_{n1}\leq n/\tau_1,M_{n2}\leq n/\tau_2\di)\to \exp(-\Gamma(\pmb{\tau}))=\exp(-\tau_1)\exp(-\tau_2)$, that is, $M_{n1}$ and $M_{n2}$ are also asymptotically independent. \\

(b) Suppose that  $\{(X_{n1},X_{n2})\}_{n\geq 1}$, satisfies condition $D^{(m)}_{\{1,2\}}$ defined by
\begin{eqnarray}\nn
\lim_{n\to\infty}n\sum_{j=m+1}^{[n/k_n]}P\es(X_{11}>n/\tau_1,X_{j2}>n/\tau_2\di)=0,
\end{eqnarray}
which extends $D^{'}_{\{1,2\}}$ of Davis (\cite{dav82} 1982), satisfied by i.i.d.~sequences. Then
\begin{eqnarray}\nn
\begin{array}{rl}
\theta(\pmb{\tau})=&\dst \frac{1}{\tau_1+\tau_2}\es(\theta_1\tau_1+\theta_2\tau_2
-\lim_{n\to\infty}n\sum_{i=2}^{m}P\es(X_{11}>n/\tau_1,X_{i2}>n/\tau_2\di)\di),
\end{array}
\end{eqnarray}
where the last part reflects the cross dependence. \\

(c) If we assume an analogous hypothesis of (\ref{etaex1}) for $(X_{11},X_{i2})$ with different $\eta_i$, we will also obtain asymptotic independence between $M_{n1}$ and $M_{n2}$, since the last term has null limit. We have $P\es(M_{n1}\leq n/\tau_1,M_{n2}\leq n/\tau_2\di)\to \exp(-\Gamma(\pmb{\tau})\theta(\pmb{\tau}))=\exp(-\theta_1\tau_1)\exp(-\theta_2\tau_2)$.\\

(d) If $\theta(\pmb{\tau})=\theta$, $\forall \pmb{\tau}\in \er^2_+$, then $\theta_1=\theta_2=\theta$ and, from (\ref{ex1teta1}),
\begin{eqnarray}\nn
\theta=\theta-\lim_{n\to\infty}k_nP\es(N_{r_n,\{1\}}>0,N_{r_n,\{2\}}>0\di),
\end{eqnarray}
which implies that this limit is null and thus $P\es(M_{n1}\leq n/\tau_1,M_{n2}\leq n/\tau_2\di)\to \exp(-\theta(\tau_1+\tau_2))=\exp(-\theta\tau_1)\exp(-\theta\tau_2)$.
\end{ex}

We present below a relation between $\theta(\pmb{\tau})$ and the extremal indexes $\theta^{**}_{\{j,\hdots,d\}}$ and $\theta^{*}_{\{j,\hdots,d\}}\es(\pmb{\tau}_{\{j,\hdots,d\}}\di)$, $j=1,\hdots,d$, which discriminates different informations contained in function  $\theta(\pmb{\tau})$ and provides an upper bound for $\theta(\pmb{\tau})$ better than the one in (\ref{IEMlims}). In Example \ref{ex2} we show that the proposed upper bound for the M4 processes, can be better than the one presented in Ehlert and Schlather (\cite{ehl+Sch08} 2008). The new upper bound has also the advantage of depending only on constant extremal indexes which can be estimated by known methods of literature.

\begin{pro}\label{p3}
Let $\{\mb{X}_n\}$ be a stationary sequence satisfying the strong-mixing condition and $\{\mb{u}_n^{(\pmb{\tau})}=(u_{n}^{(\tau_1)},\hdots,u_{n}^{(\tau_d)})\}$ a sequence of normalized real levels for which there exists $\Gamma(\pmb{\tau})$. Then
\begin{enumerate}
\item[(a)]
\begin{eqnarray}\nn
\begin{array}{rl}
\theta(\pmb{\tau})\Gamma(\pmb{\tau})=&\dst\lim_{n\to\infty}k_nP\es(N^{}_{r_n}>0\di)
=\sum_{j=1}^d\theta_j\tau_j-
\sum_{j=1}^{d-1}\theta^{**}_{\{j,\hdots,d\}}\tau^{**}_{\{j,\hdots,d\}}\es(\bigwedge_{i=j}^d\tau_i\di)\\\\
&-\dst
\sum_{j=1}^{d-1}\theta^{*}_{\{j,\hdots,d\}}\es(\pmb{\tau}^{}_{\{j,\hdots,d\}}\di)
\Gamma^{*}_{\{j,\hdots,d\}}\es(\pmb{\tau}^{}_{\{j,\hdots,d\}}\di)
\beta^{(1)}_j\es(\pmb{\tau}^{}_{\{j,\hdots,d\}}\di)\\\\
& \dst -\sum_{j=1}^{d-1}\sum_{J\subset \{j+1.\hdots,d\}}(-1)^{|J|+1}\beta^{(2)}_{\{j\}\cup J}\es(\pmb{\tau}_{\{j\}\cup J}\di),
\end{array}
\end{eqnarray}
where $\beta^{(1)}_j\es(\pmb{\tau}^{}_{\{j,\hdots,d\}}\di)=
\lim_{n\to\infty}P\es(N^{**}_{r_n,\{j,\hdots,d\}}=0|N^{*}_{r_n,\{j,\hdots,d\}}>0\di)$ and
$\beta^{(2)}_{\{j\}\cup J}\es(\pmb{\tau}_{\{j\}\cup J}\di)=
\lim_{n\to\infty}k_nP\es(\bigcap_{i\in\{j\}\cup J}\{N^{}_{r_n,\{i\}}>0\}|N^{*}_{r_n,\{j,\hdots,d\}}=0\di)$, provided that the limiting constants exist.

\item[(b)] $\theta(\pmb{\tau})\leq\frac{1}{\Gamma(\pmb{\tau})}\es(\sum_{j=1}^d\theta_j\tau_j
-\sum_{j=1}^{d-1}\theta^{**}_{\{j,\hdots,d\}}\tau^{**}_{\{j,\hdots,d\}}\es(\bigwedge_{i=j}^d\tau_i\di)\di).$\\
\end{enumerate}
\end{pro}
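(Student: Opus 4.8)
The plan is to prove part (a) by iterating the decomposition of Proposition \ref{p2} over the nested family of index sets $\{j,\hdots,d\}$, $j=1,\hdots,d$, and then to deduce part (b) as an immediate consequence of the sign of the terms being dropped.

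First I would set up the induction. For $j=1$ the set $\{1,\hdots,d\}=D$ and Proposition \ref{p2} gives exactly
$$\theta(\pmb{\tau})\Gamma(\pmb{\tau})=\theta^{**}\tau^{**}\es(\bigwedge_{i=1}^d\tau_i\di)+\theta^{*}(\pmb{\tau})\Gamma^{*}(\pmb{\tau})\beta^{(1)}(\pmb{\tau})+\sum_{\emptyset\not=J\subset D}(-1)^{|J|+1}\Theta_J(\pmb{\tau}_J).$$
The key observation is that the same splitting argument used to prove Proposition \ref{p2} applies verbatim to any subvector $\{(\mb{X}_n)_A\}$ with $A=\{j,\hdots,d\}$, since such a subvector again satisfies the strong-mixing condition and $\Gamma_A(\pmb{\tau}_A)$ exists; thus for each such $A$,
$$\theta_A(\pmb{\tau}_A)\Gamma_A(\pmb{\tau}_A)=\theta^{**}_A\tau^{**}_A\es(\bigwedge_{i\in A}\tau_i\di)+\theta^{*}_A(\pmb{\tau}_A)\Gamma^{*}_A(\pmb{\tau}_A)\beta^{(1)}_j(\pmb{\tau}_A)+\sum_{\emptyset\not=J\subset A}(-1)^{|J|+1}\Theta_J(\pmb{\tau}_J).$$
Now I would isolate, inside $\sum_{\emptyset\not=J\subset D}(-1)^{|J|+1}\Theta_J(\pmb{\tau}_J)$, the singleton term $J=\{1\}$, which contributes $\Theta_{\{1\}}=\lim_n k_nP(N_{r_n,\{1\}}>0\mid N^*_{r_n}=0)=\theta_1\tau_1$ (using that $P(N^*_{r_n}=0)\to1$, exactly as at the end of the proof of Proposition \ref{p2}), and group every non-singleton $J$ with $1\in J$ into the $\beta^{(2)}$-type terms, while every $J\subset\{2,\hdots,d\}$ reassembles, by the displayed identity above applied to $A=\{2,\hdots,d\}$, into $\theta_{\{2,\hdots,d\}}(\pmb{\tau}_{\{2,\hdots,d\}})\Gamma_{\{2,\hdots,d\}}(\pmb{\tau}_{\{2,\hdots,d\}})$ minus the $j=2$ versions of the $\theta^{**}$, $\theta^{*}\Gamma^{*}\beta^{(1)}$ and $\Theta_{\{2\}}$ contributions. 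Iterating this peeling-off from $j=1$ up to $j=d-1$ — where the residual set $\{d\}$ contributes $\theta_d\tau_d$ and no further $\theta^{**}$ or $\theta^*$ terms — produces the telescoping sum $\sum_{j=1}^d\theta_j\tau_j$ for the singleton contributions and the three correction sums over $j=1,\hdots,d-1$ displayed in the statement. I would carry this out carefully by double induction on $j$ and by bookkeeping the combinatorial signs, the main point being that the inclusion–exclusion alternating signs on the $\Theta_J$ are inherited by the $\beta^{(2)}_{\{j\}\cup J}$ through the decomposition $\mathds{1}_{\bigcap_{i\in\{j\}\cup J}\{N_{r_n,\{i\}}>0\}}=\prod$ of indicators and the conditioning on $N^*_{r_n,\{j,\hdots,d\}}=0$.

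For part (b), I would observe that all three subtracted sums in (a) are nonnegative: $\theta^{**}_{\{j,\hdots,d\}}\geq0$ and $\tau^{**}_{\{j,\hdots,d\}}(\bigwedge_i\tau_i)\geq0$; $\theta^{*}_{\{j,\hdots,d\}}\geq0$, $\Gamma^{*}_{\{j,\hdots,d\}}\geq0$ and $\beta^{(1)}_j$ is a limit of probabilities hence in $[0,1]$. Hence dropping the $\theta^*$-term and the $\beta^{(2)}$-term can only increase the right-hand side — but one must be careful, because the $\beta^{(2)}$-sum is itself alternating in sign and so is not obviously of one sign; instead I would argue directly that the whole block $\theta^{*}_{\{j,\hdots,d\}}\Gamma^{*}_{\{j,\hdots,d\}}\beta^{(1)}_j+\sum_{J\subset\{j+1,\hdots,d\}}(-1)^{|J|+1}\beta^{(2)}_{\{j\}\cup J}$ equals $\lim_n k_nP(N_{r_n,\{j,\hdots,d\}}>0,\ N^{**}_{r_n,\{j,\hdots,d\}}=0)$, which is manifestly nonnegative, by re-reading the first displayed chain of equalities in the proof of Proposition \ref{p2} applied to $A=\{j,\hdots,d\}$ (the term $k_nP(N^*_{r_n}>0,N^{**}_{r_n}=0)+k_nP(N_{r_n}>0,N^*_{r_n}=0)=k_nP(N_{r_n}>0,N^{**}_{r_n}=0)$). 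Subtracting this nonnegative quantity, together with the nonnegative $\theta^{**}$-terms, from $\sum_{j=1}^d\theta_j\tau_j$ and dividing by $\Gamma(\pmb{\tau})>0$ yields the stated bound.

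The main obstacle I anticipate is the combinatorial bookkeeping in part (a): making precise that the inclusion–exclusion sum over all $J\subset D$ in Proposition \ref{p2} splits cleanly into (i) the pieces with $\min J=j$ that become the $\beta^{(2)}_{\{j\}\cup J'}$ and $\Theta_{\{j\}}=\theta_j\tau_j$ contributions at level $j$, and (ii) the pieces with $J\subset\{j+1,\hdots,d\}$ that reconstruct $\theta_{\{j+1,\hdots,d\}}\Gamma_{\{j+1,\hdots,d\}}$ so the induction closes — and checking that no sign errors creep in when the conditioning event changes from $\{N^*_{r_n}=0\}$ to $\{N^*_{r_n,\{j,\hdots,d\}}=0\}$ at each stage (this replacement is legitimate precisely because $P(N^*_{r_n,A}=0)\to1$ for every $A$). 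A secondary, more routine point is verifying that every limiting constant appearing in the telescoped expression does exist, which the hypothesis of the proposition grants outright. Once the existence and sign facts are in place, the proof of (b) is one line.
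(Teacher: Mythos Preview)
Your route for (a) is different from the paper's, and it contains a real gap. The paper does \emph{not} iterate Proposition~\ref{p2}. Instead it writes the elementary disjoint-union identity
\[
k_nP\Big(\bigcup_{j=1}^d\{N_{r_n,\{j\}}>0\}\Big)=\sum_{j=1}^{d}k_nP\big(N_{r_n,\{j\}}>0\big)-\sum_{j=1}^{d-1}k_nP\Big(N_{r_n,\{j\}}>0,\ \bigcup_{i=j+1}^{d}\{N_{r_n,\{i\}}>0\}\Big),
\]
and then splits each summand in the second sum according to whether $N^{*}_{r_n,\{j,\ldots,d\}}$ and $N^{**}_{r_n,\{j,\ldots,d\}}$ vanish. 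No induction and no telescoping of the $\Theta_J$ are involved; the conditioning set $\{N^{*}_{r_n,\{j,\ldots,d\}}=0\}$ is introduced once, already with the correct index range.

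The gap in your plan is the claim that the change of conditioning from $\{N^{*}_{r_n,D}=0\}$ to $\{N^{*}_{r_n,A}=0\}$, $A=\{2,\ldots,d\}$, is ``legitimate precisely because $P(N^{*}_{r_n,A}=0)\to1$''. That is false under the $k_n$-scaling. For any nonempty $J\subset A$, since $\{N^{*}_{r_n,A}>0\}$ already forces $N_{r_n,\{i\}}>0$ for every $i\in J$ and $\{N^{*}_{r_n,A}=0\}\subset\{N^{*}_{r_n,D}=0\}$, one has
\[
k_nP\Big(\bigcap_{i\in J}\{N_{r_n,\{i\}}>0\},\,N^{*}_{r_n,D}=0\Big)-k_nP\Big(\bigcap_{i\in J}\{N_{r_n,\{i\}}>0\},\,N^{*}_{r_n,A}=0\Big)=k_nP\big(N^{*}_{r_n,A}>0,\,N^{*}_{r_n,D}=0\big),
\]
and the right-hand side converges to a generally \emph{nonzero} constant (of order $\theta^{*}_{A}\Gamma^{*}_{A}$). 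Hence the $\Theta_J$ coming from Proposition~\ref{p2} at the $D$-level do not coincide with the $\Theta^{A}_J$ you need to rebuild $\theta_A\Gamma_A$, and the telescope does not close without extra correction terms your outline never produces. The paper's direct decomposition sidesteps this entirely.

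Your argument for (b) is essentially sound in spirit but misidentifies the block. From the paper's decomposition one sees that, for each $j$, the sum of the $\beta^{(1)}$- and $\beta^{(2)}$-contributions equals
\[
\lim_{n\to\infty}k_nP\Big(N_{r_n,\{j\}}>0,\ \bigcup_{i=j+1}^{d}\{N_{r_n,\{i\}}>0\},\ N^{**}_{r_n,\{j,\ldots,d\}}=0\Big)\geq 0,
\]
not $\lim_n k_nP(N_{r_n,\{j,\ldots,d\}}>0,\,N^{**}_{r_n,\{j,\ldots,d\}}=0)$ as you wrote; the nonnegativity conclusion, and hence (b), is unchanged.
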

\begin{proof}
We have
\begin{eqnarray}\nn
\begin{array}{rl}
k_nP\es(N_{r_n}>0\di)=&\dst k_nP\es(\bigcup_{j=1}^d\{N^{}_{r_n,\{j\}}>0\}\di)\\\\
=&\dst \sum_{j=1}^{d-1}k_n P\es(N^{}_{r_n,\{j\}}>0,\bigcap_{i=j+1}^d\{N^{}_{r_n,\{i\}}=0\}\di)
+k_nP\es(N^{}_{r_n,\{d\}}>0\di)\\\\
=&\dst \sum_{j=1}^{d}k_n P\es(N^{}_{r_n,\{j\}}>0\di)
-\sum_{j=1}^{d-1}k_n P\es(N^{}_{r_n,\{j\}}>0,\bigcup_{i=j+1}^d\{N^{}_{r_n,\{i\}}>0\}\di).
\end{array}
\end{eqnarray}
Regarding the second term, we can also say that
\begin{eqnarray}\nn
\begin{array}{rl}
&\dst \sum_{j=1}^{d-1}k_n P\es(N^{}_{r_n,\{j\}}>0,\bigcup_{i=j+1}^d\{N^{}_{r_n,\{i\}}>0\}\di)\\\\
=& \dst \sum_{j=1}^{d-1}k_n P\es(N^{}_{r_n,\{j\}}>0,\bigcup_{i=j+1}^d\{N^{}_{r_n,\{i\}}>0\},N^{*}_{r_n,\{j,\hdots,d\}}>0\di)\\\\
&\dst + \sum_{j=1}^{d-1}k_n P\es(N^{}_{r_n,\{j\}}>0,\bigcup_{i=j+1}^d\{N^{}_{r_n,\{i\}}>0\},N^{*}_{r_n,\{j,\hdots,d\}}=0\di)\\\\
=& \dst \sum_{j=1}^{d-1}k_n P\es(N^{*}_{r_n,\{j,\hdots,d\}}>0,N^{**}_{r_n,\{j,\hdots,d\}}>0\di)\\\\
& \dst + \sum_{j=1}^{d-1}k_n P\es(N^{*}_{r_n,\{j,\hdots,d\}}>0,N^{**}_{r_n,\{j,\hdots,d\}}=0\di)\\\\
& \dst + \sum_{j=1}^{d-1}k_n P\es(N^{}_{r_n,\{j\}}>0,\bigcup_{i=j+1}^d\{N^{}_{r_n,\{i\}}>0\},N^{*}_{r_n,\{j,\hdots,d\}}=0\di)\\\\
=& \dst \sum_{j=1}^{d-1}k_n P\es(N^{**}_{r_n,\{j,\hdots,d\}}>0\di)\\\\
& \dst + \sum_{j=1}^{d-1}k_n P\es(N^{*}_{r_n,\{j,\hdots,d\}}>0,N^{**}_{r_n,\{j,\hdots,d\}}=0\di)\\\\
& \dst + \sum_{j=1}^{d-1}k_n P\es(N^{}_{r_n,\{j\}}>0,\bigcup_{i=j+1}^d\{N^{}_{r_n,\{i\}}>0\},N^{*}_{r_n,\{j,\hdots,d\}}=0\di).
\end{array}
\end{eqnarray}
Therefore,
\begin{eqnarray}\nn
\begin{array}{rl}
\theta(\pmb{\tau})\Gamma(\pmb{\tau})=&\dst\lim_{n\to\infty}k_nP\es(N^{}_{r_n}>0\di)
=\sum_{j=1}^d\theta_j\tau_j-
\sum_{j=1}^{d-1}\theta^{**}_{\{j,\hdots,d\}}\tau^{**}_{\{j,\hdots,d\}}\es(\bigwedge_{i=j}^d\tau_i\di)\\\\
&-\dst
\sum_{j=1}^{d-1}\theta^{*}_{\{j,\hdots,d\}}\es(\pmb{\tau}^{}_{\{j,\hdots,d\}}\di)
\Gamma^{*}_{\{j,\hdots,d\}}\es(\pmb{\tau}^{}_{\{j,\hdots,d\}}\di)
\lim_{n\to\infty}P\es(N^{**}_{r_n,\{j,\hdots,d\}}=0|N^{*}_{r_n,\{j,\hdots,d\}}>0\di)\\\\
& \dst -\sum_{j=1}^{d-1}\lim_{n\to\infty}k_nP\es(\bigcup_{i=j+1}^d\{N^{}_{r_n,\{j\}}>0,N^{}_{r_n,\{i\}}>0\}
|N^{*}_{r_n,\{j,\hdots,d\}}=0\di),
\end{array}
\end{eqnarray}
since $P\es(N^{*}_{r_n,\{j,\hdots,d\}}=0\di)\to 1$, as $n\to\infty$.
\end{proof}

The above result means that, for each $j\in \{1,\hdots,d\}$, the values
$\theta^{*}_{\{j,\hdots,d\}}\es(\pmb{\tau}^{}_{\{j,\hdots,d\}}\di)$ only contribute to $\theta(\pmb{\tau})$ if it is not asymptotically almost surely the local occurrence of some joint exceedances of the largest level $u_{ni}^{(\tau_j)}$, $i\in \{1,\hdots,d\}$, among the joint exceedances of these levels. Otherwise, the joint exceedances clustering is considered only through the clustering of the joint exceedances of the largest level $u_{ni}$, $i\in \{j,\hdots,d\}$, and measured by $\theta^{**}_{\{j,\hdots,d\}}$, disappearing the third term. Therefore, the second and third terms together account for the clustering of two situations of joint exceedances. The fourth term measures the clustering of exceedances of $u_{nj}$ and of one or more $u_{ni}$, $i\in \{j+1,\hdots,d\}$, in the absence of joint exceedances of levels $u_{ni}$, $i\in \{j,\hdots,d\}$, not accounted within the second and third terms. All these clustering situations were accounted by excess in the first term.\\

The function $\theta(\pmb{\tau})$ is homogeneous of order zero and thus $\theta(\tau,\hdots,\tau)=\theta(1,\hdots,1)$, $\forall \tau \in\er$. The constant $\theta(1,\hdots,1)$ has been used as a dependence coefficient of the marginals of $H$ (see, e.g., Martins and Ferreira \cite{mar+fer05} 2005, Ehlert and Schlather \cite{ehl+Sch08} 2008, Ferreira and Ferreira \cite{fer+fer15} 2015, and references therein).

We are going to analyze the consequences of the decompositions presented for $\theta(\pmb{\tau})$ in the calculation of $\theta(\mb{1})$.

If $\tau_1=\hdots=\tau_d=\tau$, then $N^{**}_{n}=N^{*}_{n}$, $\beta^{(1)}_{J}(\pmb{\tau})=0$, $\Gamma^{*}(\pmb{\tau})=\tau^{**}(\pmb{\tau})$ and $\Gamma^{}(\pmb{\tau})=\sum_{\emptyset\not=J\subset D}(-1)^{|J|+1}\tau^{**}_{J}(\pmb{\tau}_{J})$.

The first decomposition
\begin{eqnarray}\nn
\theta(\mb{1})\Gamma(\mb{1})=\theta^{**}\tau^{**}(\mb{1})
+\lim_{n\to\infty}k_nP\es(\bigcup_{j=1}^d\{N_{r_n,\{j\}}>0\},N^{*}_{r_n}=0\di),
\end{eqnarray}
separates once again the contribution of the clustering of exceedances across all marginals from the contribution of the clustering of exceedances of one or more marginals without exceedances of all marginals.

In the next section, we will give an important utility to the boundary of $\theta(\pmb{\tau})\Gamma^{*}(\pmb{\tau})$. It will serve to delimitate the difference between the tail dependence coefficients of $H$ and $\widehat{H}$.

The second decomposition allow us to obtain an upper bound for $\theta(\mb{1})$, which can be better than the one presented in (\ref{IEMlims}). From the previous result, we have
\begin{eqnarray}\label{IEM1tau1}
\theta(\mb{1})\Gamma(\mb{1})\leq\sum_{j=1}^d\theta_j-\sum_{j=1}^{d-1} \theta^{**}_{\{j,\hdots,d\}}\tau^{**}_{\{j,\hdots,d\}}(\mb{1}).
\end{eqnarray}
From the proof of Proposition \ref{p3} we found that, instead of following the order $1,\hdots,d$ to decompose initially the event $\{\bigcup_{j=1}^dN_{r_n,\{j\}}>0\}$ in a reunion of disjoint events $\{N_{r_n,\{j\}}>0,\bigcap_{i=j+1}^d\{N_{r_n,\{i\}}>0\}\}$, $j=1,\hdots,d-1$ and $\{N_{r_n,\{d\}}>0\}\}$, we can consider any other permutation $(i_1,\hdots,i_d)$ from $(1,\hdots,d)$ and repeat the process.
Therefore the previous upper limit can be improved in the following sense:
\begin{eqnarray}\nn
\theta(\mb{1})\Gamma(\mb{1})\leq\sum_{j=1}^d\theta_j-\bigvee_{(i_1,\hdots,i_d)\in\mathcal{P}_d} \sum_{j=i_1}^{i_{d-1}} \theta^{**}_{\{j,\hdots,i_d\}}\tau^{**}_{\{j,\hdots,i_d\}}(\mb{1}),
\end{eqnarray}
where $\mathcal{P}_d$ denotes the set of all permutations of $(1,\hdots,d)$.

\begin{ex}\label{ex2}
Consider the M4 process,
\begin{eqnarray}\nn
\es\{
\begin{array}{l}
X_{n1}=0.7 Z_n\vee 0.3 Z_{n-2}\\
X_{n2}=0.7 Z_{n-1}\vee 0.1 Z_{n-2}\vee 0.5 Z_{n-3},
\end{array}
\di.
\end{eqnarray}
with $\{Z_n\equiv Z_{1,n}\}$, where $\{Z_{l,n}\}$, $l\geq 1,\, n\geq 1$, is an array of independent unit Fr\'{e}chet random variables. We have $\theta_1=0.7$, $\theta_2=0.5$ and $\theta(\mb{1})\Gamma(\mb{1})=0.7$. Since $\{X_n\}_{n\geq 1}$ is $4$-independent, representing $\{X_{i1}>n/\tau,X_{i2}>n/\tau\}$ by $A_{i,n}$ and $\tau_1\wedge \tau_2=\tau$, we have that
\begin{eqnarray}\nn
\begin{array}{rl}
\theta^{**}_{\{1,2\}}\tau^{**}_{\{1,2\}}(\tau)=&\dst
\lim_{n\to\infty}nP\es(A_{3,n}\cap \overline{A}_{4,n}\cap \overline{A}_{5,n} \cap \overline{A}_{6,n}\di)\\\\
=&\dst
\lim_{n\to\infty}nP\es(\{0.1Z_1>n/\tau\}\cap \overline{A}_{4,n}\cap \overline{A}_{5,n} \cap \overline{A}_{6,n}\di)=\lim_{n\to\infty}nP\es(\{0.1Z_1>n/\tau\}\cap \overline{A}_{4,n}\di) \\\\
=& \dst \lim_{n\to\infty}nP\es(\es\{0.1Z_1>n/\tau, 0.5Z_1\leq n/\tau\di\}\cup \es\{0.1Z_1>n/\tau, 0.5Z_1> n/\tau\di\}\di)\\\\
=&\dst 0.1\tau=0.1(\tau_1\wedge \tau_2).
\end{array}
\end{eqnarray}
Therefore, Proposition \ref{p3} indicates that  $\theta(\mb{1})\Gamma(\mb{1})\leq 0.7+0.5-0.1=1.1$. The upper limit in this type of processes has no great interest since we have the theoretical expression for $\theta(\pmb{\tau})$. However, this example serves to show that our upper bound can be better than the one presented in Ehlert and Schlather (\cite{ehl+Sch08} 2008) for M4 processes. Indeed, by applying their Corollary 3, we obtain
\begin{eqnarray}\nn
\begin{array}{rl}
\theta(\mb{1})\Gamma(\mb{1})\leq &\dst\es(\Gamma(\mb{1})-\bigvee_{j=1}^2(1-\theta_j)\di)\wedge \sum_{j=1}^d\theta_j=\es(\es(0.7+0.4+0.3+0.5\di)-\es(0.3\vee 0.5\di)\di)\wedge 1.2\\\\
=& 1.4\wedge 1.2=1.2.
\end{array}
\end{eqnarray}

In the cases where the number of signatures of an M4 process exceeds the number of marginals, examples are easily constructed in which the Ehlert and Schlather upper limit is reduced to $\sum_{j=1}^d\theta_j$, being in these cases the lower limit of (\ref{IEM1tau1}) below this. Our upper bound still has the advantage of being applied to processes outside the max-stable class.
\end{ex}

\section{Effect of the extremal index in the tail of a bivariate extreme values distribution}\label{sIEMtaildep}

For each pair $(j,j')$, $j<j'$ belonging to $D$, consider the bivariate (upper) tail dependence coefficient $\chi^{F}_{jj'}\in[0,1]$ for random pair $(X_{nj},X_{nj'})$ with df $F_{jj'}$, discussed in Sibuya (\cite{sib60} 1960) and Joe (\cite{joe97} 1997), defined by
\begin{eqnarray}\nn
\dst \chi^{F}_{jj'}=\lim_{u\uparrow 1^+}P\es(F_j(X_{ij})>u|F_{j'}(X_{ij'})>u\di)
\end{eqnarray}
and coefficient $\overline{\chi}_{jj'}^F\in[-1,1]$ of Coles \ea~(\cite{col+99} 1999), defined by
\begin{eqnarray}\nn
\dst \overline{\chi}^{F}_{jj'}=\lim_{u\uparrow 1^+}\frac{2\log P\es(F_{j'}(X_{ij'})>u\di)}{\log P\es(F_j(X_{ij})>u,F_{j'}(X_{ij'})>u\di)}-1.
\end{eqnarray}

We can say that $\chi^{F}_{jj'}$ corresponds to the probability of one variable being high given that the other is high too. The case $\chi^{F}_{jj'}>0$ means asymptotic dependence between $X_{nj}$ and $X_{nj'}$ and whenever $\chi^{F}_{jj'}=0$ the variables are said to be asymptotically independent. Assuming $\chi^{F}_{jj'}>0$ within asymptotically independent data may carry to an over-estimation of
probabilities of extreme joint events (see, e.g., Ledford and Tawn \cite{led+taw96,led+taw97} 1996, 1997). Asymptotically independent models, i.e., having $\chi^{F}_{jj'}=0$, may exhibit a residual tail dependence rendering different degrees of dependence  at finite levels. Coefficient $\overline{\chi}^{F}_{jj'}$ is a suitable tail measure within this class. Thus the pair $(\chi^{F}_{jj'},\overline{\chi}^{F}_{jj'})$ is a useful tool in characterizing the extremal dependence: under asymptotic dependence we have $\overline{\chi}^{F}_{jj'}=1$ and $0<\chi^{F}_{jj'}\leq 1$ quantifies the strength of dependence between the variables  $(X_{nj},X_{nj'})$ and, within the class of asymptotic independence, we have $\chi^{F}_{jj'}=0$ and $-1\leq \overline{\chi}^{F}_{jj'}< 1$ measures the strength of dependence of the random pair.\\

Observe that, both measures can be calculated from the copula $C_{_{F_{jj'}}}(u,u)=F_{jj'}(F_j^{-1}(u),F_{j'}^{-1}(u))$, with
\begin{eqnarray}\nn
\dst \chi^{F}_{jj'}=2-\lim_{u\uparrow 1^+}\frac{\log C_{_{F_{jj'}}}(u,u)}{\log u}
\end{eqnarray}
and
\begin{eqnarray}\nn
\dst \overline{\chi}^{F}_{jj'}=\lim_{u\uparrow 1^+}\frac{2\log (1-u)}{\log \es(1-2u+C_{_{F_{jj'}}}(u,u)\di)}-1.
\end{eqnarray}

If $F$ belongs to the domain of attraction of $\widehat{H}$, then $\chi^{F}_{jj'}=\chi^{\widehat{H}}_{jj'}$ and $\overline{\chi}^{F}_{jj'}=\overline{\chi}^{\widehat{H}}_{jj'}$. This results from the uniform convergence of $C_{F}^n$ to $C_{\widehat{H}}$ and from $C_{_{F^n_{jj'}}}(u,u)=\es(C_{_{F_{jj'}}}(u^{1/n},u^{1/n})\di)^n$. 
We will then have
\begin{eqnarray}\nn
\dst \lim_{u\uparrow 1^+}\lim_{n\to\infty}\frac{\es(C_{_{F_{jj'}}}(u^{1/n},u^{1/n})\di)^n}{C_{_{F_{jj'}}}(u,u)}
=\lim_{n\to\infty}\lim_{u\uparrow 1^+}\frac{\es(C_{_{F_{jj'}}}(u^{1/n},u^{1/n})\di)^n}{C_{_{F_{jj'}}}(u,u)}=1,
\end{eqnarray}
which guarantees the constancy of $\chi^{F^n}_{jj'}$ and $\overline{\chi}^{F^n}_{jj'}$, as $n\to\infty$.\\


The presence of dependence among the variables of $\{\mb{X}_n\}$  expressed by a function $\theta(\pmb{\tau})$ with values less than one, may affect the limiting behavior of  $\chi^{F_n}_{jj'}$ but not the limiting behavior of $\overline{\chi}^{F_n}_{jj'}$, where $F_{n}$ denotes de df of $\mb{M}_n$.

\begin{pro}\label{p5}
For stationary sequences $\{\mb{X}_n\}$, with multivariate extremal index $\theta(\pmb{\tau})$, $\pmb{\tau}\in \er^d_+$, for any choice $j<j'$ in $D$, we have, $\overline{\chi}^{H}_{jj'}=\overline{\chi}^{\widehat{H}}_{jj'}$.
\end{pro}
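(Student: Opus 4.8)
The plan is to transfer the statement to the diagonal of the copula of the $(j,j')$ margin, apply the fundamental identity (\ref{HeHchap}) restricted to that pair together with the fact that $\theta(\pmb{\tau})$ is homogeneous of order zero, and then use that both $H_{jj'}$ and $\widehat{H}_{jj'}$ are bivariate extreme value distributions, so that their copula diagonals are pure powers of $u$.

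First, $\overline{\chi}^{H}_{jj'}$ depends only on the diagonal of $C_{H_{jj'}}$,
\begin{eqnarray}\nn
\overline{\chi}^{H}_{jj'}=\lim_{u\uparrow 1}\frac{2\log(1-u)}{\log\left(1-2u+C_{H_{jj'}}(u,u)\right)}-1 ,
\end{eqnarray}
and similarly for $\widehat{H}$. Since $\widehat{H}$ is MEV with unit Fr\'{e}chet margins, $C_{\widehat{H}_{jj'}}(u,u)=u^{a}$ with $a=\ell_{jj'}(1,1)\in[1,2]$, where $\ell_{jj'}$ is the stable tail dependence function of $\widehat{H}_{jj'}$, and $a=2$ exactly when $\widehat{H}_{jj'}$ is the independence copula. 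From (\ref{HeHchap}) applied to the pair, $H_{jj'}(x_j,x_{j'})=\widehat{H}_{jj'}(x_j,x_{j'})^{\theta_{\{j,j'\}}(-\log\widehat{H}_j(x_j),-\log\widehat{H}_{j'}(x_{j'}))}$ and $H_j(x_j)=\widehat{H}_j(x_j)^{\theta_j}$; inserting $x_j=H_j^{-1}(u)$, $x_{j'}=H_{j'}^{-1}(u)$ and using homogeneity of order zero of $\theta_{\{j,j'\}}$ to replace its argument by the constant $\theta^{\ast\ast}:=\theta_{\{j,j'\}}(1/\theta_j,1/\theta_{j'})\in(0,1]$ yields
\begin{eqnarray}\nn
C_{H_{jj'}}(u,u)=C_{\widehat{H}_{jj'}}\left(u^{1/\theta_j},u^{1/\theta_{j'}}\right)^{\theta^{\ast\ast}}=u^{c},\qquad c:=\theta^{\ast\ast}\,\ell_{jj'}\left(1/\theta_j,1/\theta_{j'}\right),
\end{eqnarray}
with $c\in[1,2]$ because $C_{H_{jj'}}$ is again a bivariate extreme value copula. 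For $\rho\in\{a,c\}$, expanding at $u=1$ gives $1-2u+u^{\rho}=(2-\rho)(1-u)+O((1-u)^{2})$ when $\rho<2$ and $1-2u+u^{\rho}=(1-u)^{2}$ when $\rho=2$, hence $\overline{\chi}=1$ if $\rho<2$ and $\overline{\chi}=0$ if $\rho=2$. So $\overline{\chi}^{H}_{jj'}=\overline{\chi}^{\widehat{H}}_{jj'}$ reduces to the equivalence $c=2\Leftrightarrow a=2$, that is, $H_{jj'}$ is the independence copula if and only if $\widehat{H}_{jj'}$ is.

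The main obstacle is this equivalence. For $a<2\Rightarrow c<2$ (contemporaneous asymptotic dependence is inherited by the componentwise maxima) I would argue that whenever $X_{1j}$ exceeds a high level $X_{1j'}$ tends to exceed one simultaneously, so the block that realizes $M_{nj}$ also makes $M_{nj'}$ large; converting this into the strict inequality $\gamma_{\{j,j'\}}(1/\theta_j,1/\theta_{j'})>e^{-2}$ needs some care but should follow from the clustering decomposition and the positivity of $\Gamma^{\ast}_{\{j,j'\}}$. The delicate direction is the converse, $a=2\Rightarrow c=2$: it amounts to showing that cross-series serial dependence cannot create asymptotic dependence between $M_{nj}$ and $M_{nj'}$ from coordinates that are asymptotically independent at equal times. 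This is not guaranteed by the bare existence of $\theta(\pmb{\tau})$ (asymptotically dependent joint tails at positive lags would spoil it), so here I would add a $D^{\prime}$-type condition coupling the $j$- and $j'$-sequences, as in Example~\ref{ex1}; under it the cross-lag sums $\lim_{n}n\sum_{l\geq 1}P(X_{1j}>u_{nj},X_{1+l,j'}>u_{nj'})$ vanish, and, since $\widehat{H}_{jj'}$ independence already annihilates the $\theta^{\ast\ast}\tau^{\ast\ast}$- and $\theta^{\ast}\Gamma^{\ast}$-terms in the decompositions of Propositions~\ref{p2} and \ref{p3}, one is left with $c=\theta(\pmb{\tau})\Gamma(\pmb{\tau})\big|_{\pmb{\tau}=(1/\theta_j,1/\theta_{j'})}=\theta_j(1/\theta_j)+\theta_{j'}(1/\theta_{j'})=2$. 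That last step is the one I expect to require the most work and the most attention to hypotheses.
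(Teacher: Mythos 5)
Your reduction is sound and is, in substance, the same first step the paper takes: via the identity (\ref{CHCHind}) the diagonal of $C_{H_{jj'}}$ is $u^{b}$ with $b=\theta_{jj'}(1/\theta_j,1/\theta_{j'})\Gamma_{jj'}(1/\theta_j,1/\theta_{j'})$, the diagonal of $C_{\widehat{H}_{jj'}}$ is $u^{a}$ with $a=\Gamma_{jj'}(1,1)$, and since both are extreme-value copulas $\overline{\chi}$ equals $1$ when the exponent is $<2$ and $0$ when it equals $2$. So, as you say, the proposition is exactly the equivalence $a=2\Leftrightarrow b=2$. Where you diverge from the paper is in what happens next: the paper writes $1-\overline{\chi}^{H}_{jj'}=(1-\overline{\chi}^{\widehat{H}}_{jj'})A$ with $A=\lim_{u\uparrow 1}\log(1-2u+u^{a})/\log(1-2u+u^{b})$ and evaluates $A=1$ by L'H\^opital; that computation effectively divides by $a-2$ and $b-2$ and is valid only when both exponents are strictly below $2$ (where both $\overline{\chi}$'s are already $1$) or both equal $2$. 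In the mixed case it gives $A\neq 1$. The paper's proof therefore silently assumes away precisely the case you isolated.

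Your worry about that case is justified: under the hypotheses as stated the conclusion can fail. Take $\{Y_n\}$ i.i.d. unit Fr\'echet and set $X_{n1}=Y_n$, $X_{n2}=Y_{n-1}$. Then $F$ is the product of its margins, so $\widehat{H}$ is the independence copula, $a=2$ and $\overline{\chi}^{\widehat{H}}_{12}=0$; but $M_{n1}$ and $M_{n2}$ differ by a single term, $H(x_1,x_2)=\exp(-(x_1^{-1}\vee x_2^{-1}))$, $b=1$ and $\overline{\chi}^{H}_{12}=1$. The multivariate extremal index exists here, $\theta(\tau_1,\tau_2)=(\tau_1\vee\tau_2)/(\tau_1+\tau_2)$, so no hypothesis of the proposition is violated. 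Hence your instinct to add a cross-lag anti-clustering condition of $D^{(m)}_{\{j,j'\}}$-type is the right repair, and under such a condition your route through the decompositions of Propositions \ref{p2} and \ref{p3} is the natural way to get $a=2\Rightarrow b=2$. Be aware, though, that what you have written is not yet a proof: both implications are left as sketches, and the forward one ($a<2\Rightarrow b<2$) also needs an explicit argument --- it follows from the paper's inequality (\ref{chimin}) only when $\theta^{**}_{jj'}>0$, so the degenerate case $\theta^{**}_{jj'}=0$ must be ruled out or assumed away. In short: correct reduction, a genuinely valuable observation that exposes a gap in the published argument, but an incomplete proof of the statement as given.
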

\begin{proof}
Based on the spectral representation of MEV copulas and relation
\begin{eqnarray}\label{CHCHind}
\dst C_{_{H_{jj'}}}(u_j,u_{j'})=\es(C_{_{\widehat{H}_{jj'}}}\es(u_j^{1/\theta_j},u_{j'}^{1/\theta_{j'}}\di)\di)
^{\theta\es(-\frac{\log u_j}{\theta_j},-\frac{\log u_{j'}}{\theta_{j'}}\di)},
\end{eqnarray}
we have
\begin{eqnarray}\nn
\begin{array}{rl}
\overline{\chi}^{\widehat{H}}_{jj'}=&\dst \lim_{u\uparrow 1^+}\frac{2\log (1-u)}{\log\es(1-2u-C_{_{\widehat{H}_{jj'}}}(u,u)\di)}-1\\\\
=&\dst \lim_{u\uparrow 1^+}\frac{2\log (1-u)}{\log\es(1-2u-\exp\es(-\int_{0}^{1}\es(w(-\log u) \vee (1-w)(-\log u)\di) d\widehat{W}(w)\di)\di)}-1\\\\
=&\dst \lim_{u\uparrow 1^+}\frac{2\log (1-u)}{\log\es(1-2u-u^{-\log C_{_{\widehat{H}_{jj'}}}\es(e^{-1},e^{-1}\di)}\di)}-1
\end{array}
\end{eqnarray}
where $\widehat{W}$ is the spectral measure of $\widehat{H}$. On the other hand
\begin{eqnarray}\nn
\begin{array}{rl}
\overline{\chi}^{H}_{jj'}=&\dst \lim_{u\uparrow 1^+}\frac{2\log (1-u)}{\log\es(1-2u-u^{\theta_{jj'}\es(\frac{1}{\theta_j},\frac{1}{\theta_{j'}}\di)\es(-\log C_{_{\widehat{H}_{jj'}}}\es(\exp\es(-\theta_{j}^{-1}\di),\exp\es(-\theta_{j'}^{-1}\di)\di)\di)}\di)}-1
\end{array}
\end{eqnarray}
Therefore,
\begin{eqnarray}\nn
(1-\overline{\chi}^{H}_{jj'})=(1-\overline{\chi}^{\widehat{H}}_{jj'})A
\end{eqnarray}
with
\begin{eqnarray}\nn
\begin{array}{rl}
A=&\dst \lim_{u\uparrow 1^+}\frac{\log\es(1-2u-u^{\Gamma(1,1)}\di)}{\log\es(1-2u-
u^{\theta_{jj'}\es(\frac{1}{\theta_j},\frac{1}{\theta_{j'}}\di)
\Gamma\es(\frac{1}{\theta_j},\frac{1}{\theta_{j'}}\di)}\di)}\\\\
=&\dst \lim_{u\uparrow 1^+}\frac{\log\es(1-2u-u^{a}\di)}{\log\es(1-2u-
u^{b}\di)}=\dst \lim_{u\uparrow 1^+}\frac{-2+au^{a-1}}{-2+bu^{b-1}}\dst \lim_{u\uparrow 1^+}\frac{1-2u+u^{b}}{1-2u+u^{a}}=1,
\end{array}
\end{eqnarray}
with $a=\Gamma(1,1)$ and $b=\theta_{jj'}\es(\frac{1}{\theta_j},\frac{1}{\theta_{j'}}\di)
\Gamma\es(\frac{1}{\theta_j},\frac{1}{\theta_{j'}}\di)$.
\end{proof}

\begin{pro}\label{p4}
For stationary sequences $\{\mb{X}_n\}$, with multivariate extremal index $\theta(\pmb{\tau})$, $\pmb{\tau}\in \er^d_+$, we have, for any choice $j<j'$ in $D$,
\begin{enumerate}
\item[(a)] $\chi^{H}_{jj'}=2-\theta_{jj'}\es(\frac{1}{\theta_j},\frac{1}{\theta_{j'}}\di)\Gamma_{jj'}\es(\frac{1}{\theta_j},\frac{1}{\theta_{j'}}\di)$;
\item[(b)] $\chi^{H}_{jj'}-\chi^{\widehat{H}}_{jj'}=\Gamma_{jj'}\es(1,1\di)-\theta_{jj'}\es(\frac{1}{\theta_j},\frac{1}{\theta_{j'}}\di)\Gamma_{jj'}\es(\frac{1}{\theta_j},\frac{1}{\theta_{j'}}\di)$.
\end{enumerate}
\end{pro}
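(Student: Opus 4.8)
The plan is to read off $\chi^{H}_{jj'}$ and $\chi^{\widehat H}_{jj'}$ from the copula formula $\chi^{F}_{jj'}=2-\lim_{u\uparrow 1^+}\log C_{_{F_{jj'}}}(u,u)/\log u$, starting from the bivariate specialization of the identity (\ref{CHCHind}) already used in the proof of Proposition \ref{p5},
\[
C_{_{H_{jj'}}}(u,u)=\Bigl(C_{_{\widehat H_{jj'}}}\bigl(u^{1/\theta_j},u^{1/\theta_{j'}}\bigr)\Bigr)^{\theta_{jj'}\left(-\frac{\log u}{\theta_j},\,-\frac{\log u}{\theta_{j'}}\right)}.
\]
Taking logarithms turns the right-hand side into a product; since $\theta(\pmb{\tau})$ is homogeneous of order zero, the exponent $\theta_{jj'}\bigl(-\tfrac{\log u}{\theta_j},-\tfrac{\log u}{\theta_{j'}}\bigr)$ equals the constant $\theta_{jj'}\bigl(\tfrac1{\theta_j},\tfrac1{\theta_{j'}}\bigr)$ for every $u\in(0,1)$.

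The second ingredient identifies $\log C_{_{\widehat H_{jj'}}}$ with $-\Gamma_{jj'}$. Since $\widehat H$ has margins $\widehat H_j(x)=\exp(-x^{-1})$ we have $\widehat H_j^{-1}(u)=-1/\log u$, so $C_{_{\widehat H_{jj'}}}(u_j,u_{j'})=\widehat H_{jj'}\bigl(-1/\log u_j,-1/\log u_{j'}\bigr)$; and from $\widehat\gamma(\pmb{\tau})=\exp(-\Gamma(\pmb{\tau}))$, passing to the $(j,j')$-margin (i.e.\ letting $\tau_i\to 0^+$ for $i\neq j,j'$), one gets $\widehat H_{jj'}(1/\tau_j,1/\tau_{j'})=\exp\bigl(-\Gamma_{jj'}(\tau_j,\tau_{j'})\bigr)$. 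Hence $\log C_{_{\widehat H_{jj'}}}\bigl(u^{1/\theta_j},u^{1/\theta_{j'}}\bigr)=-\Gamma_{jj'}\bigl(-\tfrac{\log u}{\theta_j},-\tfrac{\log u}{\theta_{j'}}\bigr)$, and because $\widehat H$ is max-stable with unit Fr\'echet margins, $\Gamma$ is homogeneous of order one, so this equals $(\log u)\,\Gamma_{jj'}\bigl(\tfrac1{\theta_j},\tfrac1{\theta_{j'}}\bigr)$. Substituting back, $\log C_{_{H_{jj'}}}(u,u)=(\log u)\,\theta_{jj'}\bigl(\tfrac1{\theta_j},\tfrac1{\theta_{j'}}\bigr)\Gamma_{jj'}\bigl(\tfrac1{\theta_j},\tfrac1{\theta_{j'}}\bigr)$, so the ratio $\log C_{_{H_{jj'}}}(u,u)/\log u$ is already constant in $u$ (no L'H\^opital needed, in contrast with Proposition \ref{p5}), and part (a) follows.

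For part (b) I would apply the very same computation to $\widehat H$, which is the degenerate case $\theta_j=\theta_{j'}=1$ and $\theta_{jj'}\equiv1$: it gives $\log C_{_{\widehat H_{jj'}}}(u,u)=(\log u)\,\Gamma_{jj'}(1,1)$, hence $\chi^{\widehat H}_{jj'}=2-\Gamma_{jj'}(1,1)$; together with the already noted equality $\chi^{\widehat H}_{jj'}=\chi^{F}_{jj'}$, subtracting the two expressions for $\chi^{H}_{jj'}$ and $\chi^{\widehat H}_{jj'}$ yields $\chi^{H}_{jj'}-\chi^{\widehat H}_{jj'}=\Gamma_{jj'}(1,1)-\theta_{jj'}\bigl(\tfrac1{\theta_j},\tfrac1{\theta_{j'}}\bigr)\Gamma_{jj'}\bigl(\tfrac1{\theta_j},\tfrac1{\theta_{j'}}\bigr)$.

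The proof is essentially bookkeeping once (\ref{CHCHind}) is granted; the points that need care are the bivariate restriction of (\ref{CHCHind}) (which is precisely the relation invoked in Proposition \ref{p5}), the marginalization $\widehat H_{jj'}(1/\tau_j,1/\tau_{j'})=\exp(-\Gamma_{jj'}(\tau_j,\tau_{j'}))$ obtained by sending $\tau_i\to0^+$ off $\{j,j'\}$, and keeping the two scalings straight so that homogeneity of order zero of $\theta$ and homogeneity of order one of $\Gamma$ combine to replace every occurrence of $-\log u$ by $1$ and leave the argument $(\tfrac1{\theta_j},\tfrac1{\theta_{j'}})$ of $\theta_{jj'}$ and $\Gamma_{jj'}$. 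That last tracking of exponents is the only place where a slip is easy to make.
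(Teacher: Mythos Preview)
Your proof is correct and follows essentially the same route as the paper's: both start from relation (\ref{CHCHind}), take logarithms, use the order-zero homogeneity of $\theta_{jj'}$ to freeze the exponent at $\theta_{jj'}(1/\theta_j,1/\theta_{j'})$, and then identify $-\log C_{_{\widehat H_{jj'}}}(u^{1/\theta_j},u^{1/\theta_{j'}})$ with $(-\log u)\,\Gamma_{jj'}(1/\theta_j,1/\theta_{j'})$. The only cosmetic difference is that the paper carries out this last identification by writing out the Pickands spectral representation $\int_0^1\bigl(w/\theta_j\vee(1-w)/\theta_{j'}\bigr)\,d\widehat W(w)$ and factoring $-\log u$ out of the integrand, whereas you invoke the order-one homogeneity of $\Gamma_{jj'}$ directly via $\widehat H_{jj'}(1/\tau_j,1/\tau_{j'})=\exp(-\Gamma_{jj'}(\tau_j,\tau_{j'}))$; your version is slightly more economical but the substance is identical.
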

\begin{proof}
Using the spectral representation of MEV copulas and relation (\ref{CHCHind}),
we have
\begin{eqnarray}\nn
\begin{array}{rl}
\chi^{H}_{jj'}=&\dst 2-\theta_{jj'}\es(\frac{1}{\theta_j},\frac{1}{\theta_{j'}}\di)
\lim_{u\uparrow 1^+}\frac{\int_{0}^{1}\es(-\frac{\log u w}{\theta_j}\vee-\frac{\log u (1-w)}{\theta_{j'}}\di)d\widehat{W}(w)}{-\log u}\\\\
=&\dst 2-\theta_{jj'}\es(\frac{1}{\theta_j},\frac{1}{\theta_{j'}}\di)
\int_{0}^{1}\es(\frac{w}{\theta_j}\vee\frac{1-w}{\theta_{j'}}\di)d\widehat{W}(w)\\\\
=&\dst 2-\es(-\theta_{jj'}\es(\frac{1}{\theta_j},\frac{1}{\theta_{j'}}\di)
\log C_{_{\widehat{H}_{jj'}}}\es(\exp(-1/\theta_{j}),\exp(-1/\theta_{j'}) \di)\di)\\\\
=&\dst 2-\theta_{jj'}\es(\frac{1}{\theta_j},\frac{1}{\theta_{j'}}\di)
\Gamma_{jj'}\es(\frac{1}{\theta_j},\frac{1}{\theta_{j'}}\di),
\end{array}
\end{eqnarray}
where $\widehat{W}$ is the spectral measure of $\widehat{H}$.
\end{proof}

The previous proposition can be rewritten in terms of the extremal coefficients $\varepsilon^{H}_{jj'}$ and $\varepsilon^{\widehat{H}}_{jj'}$, such that, $C_{_{\widehat{H}_{jj'}}}(u,u)=u^{\varepsilon^{\widehat{H}}_{jj'}}$  and $C_{_{H_{jj'}}}(u,u)=u^{\varepsilon^{H}_{jj'}}$, since these satisfy the relations $\chi^{H}_{jj'}=2-\varepsilon^{H}_{jj'}$ and $\chi^{\widehat{H}}_{jj'}=2-\varepsilon^{\widehat{H}}_{jj'}$. From (a) we conclude that $\varepsilon^{H}_{jj'}=\theta_{jj'}\es(\frac{1}{\theta_j},\frac{1}{\theta_{j'}}\di)\Gamma_{jj'}\es(\frac{1}{\theta_j},\frac{1}{\theta_{j'}}\di)$. Consequently, for the measure of asymptotic independence called madogram (Naveau \ea~\cite{nav+09} 2009), defined by
\begin{eqnarray}\nn
\nu_{jj'}^{F}=\frac{1}{2}E\es|F_j(X_{nj})-F_{j'}(X_{nj'})\di|
\end{eqnarray}
and satisfying
\begin{eqnarray}\nn
\nu_{jj'}^{F}=\frac{1}{2}\frac{\varepsilon^{F}_{jj'}-1}{\varepsilon^{F}_{jj'}+1},
\end{eqnarray}
we have
\begin{enumerate}
\item[(a)] $\nu_{jj'}^{F}=\nu_{jj'}^{\widehat{H}}=\frac{1}{2}\frac{\Gamma_{jj'}(1,1)-1}{\Gamma_{jj'}(1,1)+1}$;
\item[(b)] $\nu_{jj'}^{H}=\frac{1}{2}
    \frac{\theta_{jj'}\es(\frac{1}{\theta_j},\frac{1}{\theta_{j'}}\di)\Gamma_{jj'}\es(\frac{1}{\theta_j},\frac{1}{\theta_{j'}}\di)-1}
    {\theta_{jj'}\es(\frac{1}{\theta_j},\frac{1}{\theta_{j'}}\di)\Gamma_{jj'}\es(\frac{1}{\theta_j},\frac{1}{\theta_{j'}}\di)+1}.$\\
\end{enumerate}

Therefore, for large $n$, the madogram of $(M_{nj},M_{nj'})$ can not be taken by the madogram of $(\widehat{M}_{nj},\widehat{M}_{nj'})$.\\

From relation (b) in Proposition \ref{p3}, we conclude that
\begin{eqnarray}\label{chimin}
\chi_{jj'}^{H}\geq \theta_{jj'}^{**}\tau_{jj'}^{**}\es(\frac{1}{\theta_j\vee \theta_{j'}}\di)
\end{eqnarray}
and we can establish the following consequence about the value of the difference between $\chi_{jj'}^{H}$ and $\chi_{jj'}^{\widehat{H}}$.

\begin{cor}
For stationary sequences $\{\mb{X}_n\}$ satisfying the strong-mixing condition, with multivariate extremal index $\theta(\pmb{\tau})$, $\pmb{\tau}\in \er^d_+$, we have, for any choice $j<j'$ in $D$,
\begin{enumerate}
\item[(a)] $\theta(\pmb{\tau})=\theta$,  $\forall \pmb{\tau}\in \er^d_+$ implies $\chi_{jj'}^{H}=\chi_{jj'}^{\widehat{H}}$;
\item[(b)] $\es|\chi^{H}_{jj'}-\chi^{\widehat{H}}_{jj'}\di|
    \geq\max\es\{\theta^{**}_{jj'}\tau^{**}_{jj'}\es(\frac{1}{\theta_j\vee\theta_{j'}}\di)-
    2+\Gamma_{jj'}\es(1,1\di),1-\Gamma_{jj'}\es(1,1\di)\di\}$.
\end{enumerate}
\end{cor}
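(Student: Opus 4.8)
The plan is to derive both lower bounds in part (b) from results already established, and to get part (a) as an immediate consequence. For part (a), recall from case (d) of Example~\ref{ex1} (and the general homogeneity remarks) that if $\theta(\pmb{\tau})=\theta$ for all $\pmb{\tau}$, then $\theta_j=\theta_{j'}=\theta$ and, more importantly, the bivariate subsequence $\{(X_{nj},X_{nj'})\}$ inherits $\theta_{jj'}(\pmb{\tau})=\theta$, a constant. Then in Proposition~\ref{p4}(b) we have $\theta_{jj'}\es(\frac{1}{\theta_j},\frac{1}{\theta_{j'}}\di)=\theta$ and, using that $\Gamma_{jj'}$ is homogeneous of order one together with the relation $\Gamma_{jj'}(\tau_j,\tau_{j'})=\tau_j+\tau_{j'}-\Gamma^*_{jj'}(\tau_j,\tau_{j'})$, a short computation shows $\theta\,\Gamma_{jj'}\es(\frac1\theta,\frac1\theta\di)=\Gamma_{jj'}(1,1)$, whence $\chi^H_{jj'}-\chi^{\widehat H}_{jj'}=0$ by Proposition~\ref{p4}(b).

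For part (b), I would start from Proposition~\ref{p4}(b), which gives the exact identity $\chi^H_{jj'}-\chi^{\widehat H}_{jj'}=\Gamma_{jj'}(1,1)-\theta_{jj'}\es(\frac{1}{\theta_j},\frac{1}{\theta_{j'}}\di)\Gamma_{jj'}\es(\frac{1}{\theta_j},\frac{1}{\theta_{j'}}\di)$. For the second term inside the max, note that the multivariate extremal index satisfies $\theta_{jj'}(\cdot)\le 1$, so $\theta_{jj'}\es(\frac{1}{\theta_j},\frac{1}{\theta_{j'}}\di)\Gamma_{jj'}\es(\frac{1}{\theta_j},\frac{1}{\theta_{j'}}\di)\le \Gamma_{jj'}\es(\frac{1}{\theta_j},\frac{1}{\theta_{j'}}\di)$; combining this with the elementary bound $\Gamma_{jj'}(\tau_j,\tau_{j'})\le \tau_j+\tau_{j'}$ applied at $\tau_j=1/\theta_j$, $\tau_{j'}=1/\theta_{j'}$ gives an upper estimate of the subtracted term, but one must be careful with signs: since $\theta_j,\theta_{j'}\le 1$ one has $\Gamma_{jj'}(1,1)\le \tau_j+\tau_{j'}$ with the $\tau$'s $\ge 1$, so the sign of the difference need not be controlled this way directly. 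Instead I would observe $\chi^H_{jj'}=2-\varepsilon^H_{jj'}\le 1$ always (an extremal coefficient on $[1,2]$), hence $\chi^H_{jj'}-\chi^{\widehat H}_{jj'}\ge 1-\chi^{\widehat H}_{jj'}=1-(2-\Gamma_{jj'}(1,1))=\Gamma_{jj'}(1,1)-1$; taking absolute values and noting the right side may be negative, this yields the bound $\es|\chi^H_{jj'}-\chi^{\widehat H}_{jj'}\di|\ge 1-\Gamma_{jj'}(1,1)$, the second entry in the max.

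For the first entry in the max, I would invoke the inequality \eqref{chimin}, namely $\chi^H_{jj'}\ge \theta^{**}_{jj'}\tau^{**}_{jj'}\es(\frac{1}{\theta_j\vee\theta_{j'}}\di)$, which itself follows from part (b) of Proposition~\ref{p3} applied to the bivariate subsequence (using $\varepsilon^H_{jj'}=\theta_{jj'}\Gamma_{jj'}$ and the single-term lower-order decomposition there). Subtracting $\chi^{\widehat H}_{jj'}=2-\Gamma_{jj'}(1,1)$ gives $\chi^H_{jj'}-\chi^{\widehat H}_{jj'}\ge \theta^{**}_{jj'}\tau^{**}_{jj'}\es(\frac{1}{\theta_j\vee\theta_{j'}}\di)-2+\Gamma_{jj'}(1,1)$, and again passing to absolute value only weakens the statement, producing the first entry. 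Finally, since $\es|\chi^H_{jj'}-\chi^{\widehat H}_{jj'}\di|$ dominates each of the two lower bounds separately, it dominates their maximum, which is exactly claim (b).

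The main obstacle I anticipate is the bookkeeping around homogeneity and the reduction from the $d$-dimensional objects $\theta(\pmb{\tau})$, $\theta^{**}$, $\Gamma$ to their bivariate $(j,j')$-marginal counterparts $\theta_{jj'}$, $\theta^{**}_{jj'}$, $\Gamma_{jj'}$ — one must be sure that Proposition~\ref{p3}(b) and \eqref{chimin} are legitimately applied to the subsequence $\{(X_{nj},X_{nj'})\}$ at the specific levels $\tau_j=1/\theta_j$, $\tau_{j'}=1/\theta_{j'}$, and that $\tau^{**}_{jj'}$ evaluated there equals $\tau^{**}_{jj'}\es(\frac{1}{\theta_j\vee\theta_{j'}}\di)$ because $\bigwedge_{i\in\{j,j'\}}\frac1{\theta_i}=\frac1{\theta_j\vee\theta_{j'}}$. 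Everything else is a sign-tracking exercise on the elementary inequalities $1\le \varepsilon^H_{jj'}\le 2$ and $\theta_{jj'}\le 1$.
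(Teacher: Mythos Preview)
Your approach matches the paper's: part (a) via Proposition~\ref{p4}(b) plus homogeneity of $\Gamma_{jj'}$ (the detour through $\Gamma^*_{jj'}$ is unnecessary---homogeneity alone gives $\theta\,\Gamma_{jj'}(1/\theta,1/\theta)=\Gamma_{jj'}(1,1)$ directly), and part (b) via Proposition~\ref{p4}(b) together with \eqref{chimin}, exactly as the paper indicates.

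There is, however, a sign slip in your derivation of the second entry of the maximum. From $\chi^H_{jj'}\le 1$ you can only conclude $\chi^H_{jj'}-\chi^{\widehat H}_{jj'}\le 1-\chi^{\widehat H}_{jj'}=\Gamma_{jj'}(1,1)-1$, not $\ge$; and from a bound $x\ge a$ one obtains $|x|\ge a$, not $|x|\ge -a$, so your ``taking absolute values'' step does not do what you claim. Fortunately this does not affect the statement: since $\Gamma_{jj'}(1,1)=\varepsilon^{\widehat H}_{jj'}\in[1,2]$, the quantity $1-\Gamma_{jj'}(1,1)$ is always $\le 0$, so that entry of the maximum is dominated trivially by any absolute value. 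The only substantive content of part (b) is therefore the first entry, and your argument for that is correct. (A minor point on part (a): the appeal to case (d) of Example~\ref{ex1} is misplaced, as that example lives under the Ledford--Tawn hypothesis~\eqref{etaex1}; the fact that $\theta(\pmb\tau)\equiv\theta$ forces $\theta_j=\theta_{j'}=\theta$ and $\theta_{jj'}\equiv\theta$ follows directly from the general relation $\theta_A(\pmb\tau_A)=\lim_{\tau_i\to 0^+,\,i\notin A}\theta(\pmb\tau)$ stated in Section~\ref{sint}.)
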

\begin{proof}
\begin{enumerate}
\item[(a)] If $\theta(\pmb{\tau})$ is constant equal to $\theta$, then $\theta_j=\theta_{j'}=\theta$ and, since $\Gamma$ is homogeneous of order 1,  from (b) of Proposition \ref{p4}, we have $\chi_{jj'}^{H}-\chi_{jj'}^{\widehat{H}}=\Gamma_{jj'}\es(1,1\di)
    -\Gamma_{jj'}\es(\frac{\theta}{\theta},\frac{\theta}{\theta}\di)=0$;
\item[(b)] The inequality follows from (b) of Proposition \ref{p4} and from (\ref{chimin}).
\end{enumerate}
\end{proof}

We emphasize that the quantity $ \theta_{jj'}^{**}\tau_{jj'}^{**}\es(\frac{1}{\theta_j\vee \theta_{j'}}\di)$ that we find in (\ref{chimin}) and in (b)  of the previous proposition reflects a clustering propensity of $X_{nj}\wedge X_{nj'}$ through the extremal index $\theta^{**}_{jj'}$ and
\begin{eqnarray}\nn
\tau^{**}_{jj'}\es(\frac{1}{\theta_j\vee \theta_{j'}}\di)=\lim_{n\to\infty}nP\es(X_{nj}>n(\theta_j\vee \theta_{j'}),X_{nj'}>n(\theta_j\vee \theta_{j'})\di).
\end{eqnarray}

From this discussion we conclude that:
\begin{enumerate}
\item[(i)] The tail dependencies of  $\es(\widehat{M}_{n1},\widehat{M}_{n2}\di)$ and of $\es(M_{n1},M_{n2}\di)$, for large $n$, evaluated through coefficient $\chi$, can be considered equal when the multivariate extremal index is constant, otherwise they differ in at least
$\max\es\{\theta^{**}_{jj'}\tau^{**}_{jj'}\es(\frac{1}{\theta_j\vee\theta_{j'}}\di)-
    2+\Gamma_{jj'}\es(1,1\di),1-\Gamma_{jj'}\es(1,1\di)\di\}$, where the previous quantities can be estimated from the existing methods in literature.
\item[(ii)] If we estimate the dependence $\chi_{jj'}^{F}$ on the tail of $\es(X_{nj}, X_{nj'}\di)$, we do not obtain the dependence on the tail of $\es(M_{n1},M_{n2}\di)$, unless we correct the result with an estimate of $\Gamma_{jj'}\es(1,1\di)-
    \theta_{jj'}\es(\frac{1}{\theta_j},\frac{1}{\theta_{j'}}\di)\Gamma_{jj'}\es(\frac{1}{\theta_j},\frac{1}{\theta_{j'}}\di)$.\\
\end{enumerate}

In cases where $\widehat{H}$ has totally dependent marginals ($\chi_{jj'}^{\widehat{H}}=1$) or has independent marginals ($\chi_{jj'}^{\widehat{H}}=0$), the previous lower limit loses interest by triviality. We underline the expression of $\chi_{jj'}^{H}$ in these two cases in the next result, which is derived from (a) of Proposition \ref{p4}.

\begin{cor}
For stationary sequences $\{\mb{X}_n\}$, with multivariate extremal index $\theta(\pmb{\tau})$, $\pmb{\tau}\in \er^d_+$, we have, for any choice $j<j'$ in $D$,
\begin{enumerate}
\item[(a)] If $H$ has independent marginals, then $\chi^{H}_{jj'}=2-\es(\frac{1}{\theta_j}+\frac{1}{\theta_{j'}}\di)
    \theta_{jj'}\es(\frac{1}{\theta_j},\frac{1}{\theta_{j'}}\di)$;
\item[(b)] If $H$ has totally dependent marginals, then $\chi^{H}_{jj'}=2-\es(\frac{1}{\theta_j}\vee\frac{1}{\theta_{j'}}\di)
    \theta_{jj'}\es(\frac{1}{\theta_j},\frac{1}{\theta_{j'}}\di)$.\\
\end{enumerate}
\end{cor}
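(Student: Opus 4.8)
The plan is to derive both identities as direct specializations of part (a) of Proposition \ref{p4}, which already gives $\chi^{H}_{jj'}=2-\theta_{jj'}\es(\frac{1}{\theta_j},\frac{1}{\theta_{j'}}\di)\Gamma_{jj'}\es(\frac{1}{\theta_j},\frac{1}{\theta_{j'}}\di)$. Recall from the definitions that $\Gamma_{jj'}(\pmb\tau)=\tau_j+\tau_{j'}-\Gamma^{*}_{jj'}(\pmb\tau)$, where $\Gamma^{*}_{jj'}(\pmb\tau)=\lim_{n\to\infty}nP\es(X_{ij}>u_{nj}^{(\tau_j)},X_{ij'}>u_{nj'}^{(\tau_{j'})}\di)$ is the limiting mean number of joint exceedances for the independent companion sequence. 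So the whole matter reduces to evaluating $\Gamma^{*}_{jj'}$ at the point $\es(\frac{1}{\theta_j},\frac{1}{\theta_{j'}}\di)$ in the two extreme dependence regimes of $\widehat{H}_{jj'}$, and then substituting into the formula of Proposition \ref{p4}(a).

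For part (a), independent marginals of $H$ is equivalent to independent marginals of $\widehat{H}$ (since $H_{jj'}(x,y)=\widehat H_{jj'}(x,y)^{\theta_{jj'}(\cdot)}$ and raising a power of a product of marginal factors preserves the product form), hence $C_{\widehat H_{jj'}}(u,v)=uv$ and the spectral measure $\widehat W$ puts mass only at the endpoints with the right normalization. Then for the event $\{X_{ij}>u_{nj}^{(1/\theta_j)}\}\cap\{X_{ij'}>u_{nj'}^{(1/\theta_{j'})}\}$, asymptotic independence of the coordinates gives $\Gamma^{*}_{jj'}\es(\frac{1}{\theta_j},\frac{1}{\theta_{j'}}\di)=0$, whence $\Gamma_{jj'}\es(\frac{1}{\theta_j},\frac{1}{\theta_{j'}}\di)=\frac{1}{\theta_j}+\frac{1}{\theta_{j'}}$, and substituting into Proposition \ref{p4}(a) yields the stated expression. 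For part (b), totally dependent marginals of $\widehat{H}$ means $C_{\widehat H_{jj'}}(u,v)=u\wedge v$, i.e.\ the two coordinates coincide in the limit, so the joint exceedance event reduces to a single exceedance at the \emph{more stringent} of the two levels; since $u_{nj}^{(1/\theta_j)}\approx n\theta_j$ and $u_{nj'}^{(1/\theta_{j'})}\approx n\theta_{j'}$, the binding level corresponds to $\theta_j\vee\theta_{j'}$, giving $\Gamma^{*}_{jj'}\es(\frac{1}{\theta_j},\frac{1}{\theta_{j'}}\di)=\frac{1}{\theta_j}\wedge\frac{1}{\theta_{j'}}$ and therefore $\Gamma_{jj'}\es(\frac{1}{\theta_j},\frac{1}{\theta_{j'}}\di)=\frac{1}{\theta_j}\vee\frac{1}{\theta_{j'}}$. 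Plugging this into Proposition \ref{p4}(a) gives (b).

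Alternatively, and perhaps more cleanly, one can argue entirely at the level of copulas via the relation (\ref{CHCHind}) and the extremal-coefficient identity $\varepsilon^{H}_{jj'}=\theta_{jj'}\es(\frac{1}{\theta_j},\frac{1}{\theta_{j'}}\di)\Gamma_{jj'}\es(\frac{1}{\theta_j},\frac{1}{\theta_{j'}}\di)$ noted just after Proposition \ref{p4}: when $\widehat H_{jj'}$ is independent, $\varepsilon^{\widehat H}_{jj'}=2$, so that $\Gamma_{jj'}(1,1)=2$ and by homogeneity $\Gamma_{jj'}\es(\frac{1}{\theta_j},\frac{1}{\theta_{j'}}\di)$ must be computed from the explicit additive form $\Gamma_{jj'}(\tau_j,\tau_{j'})=\tau_j+\tau_{j'}$; when $\widehat H_{jj'}$ is totally dependent, $\varepsilon^{\widehat H}_{jj'}=1$, so $\Gamma_{jj'}(\tau_j,\tau_{j'})=\tau_j\vee\tau_{j'}$, again by the explicit stable-tail-dependence form together with homogeneity of order one. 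Either way the remaining work is a one-line substitution.

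The main obstacle — really the only nontrivial point — is justifying that in the totally dependent case the limiting joint-exceedance intensity $\Gamma^{*}_{jj'}\es(\frac{1}{\theta_j},\frac{1}{\theta_{j'}}\di)$ equals $\frac{1}{\theta_j}\wedge\frac{1}{\theta_{j'}}$ rather than something smaller: one must check that perfect dependence of $\widehat H_{jj'}$ forces, in the $n\to\infty$ limit, the exceedance of the higher threshold to \emph{imply} the exceedance of the lower one, so that the joint event has the same limiting frequency as the rarer single-coordinate event. This follows from $C_{\widehat H_{jj'}}(u,v)=u\wedge v$ by writing the joint survival probability through inclusion–exclusion and passing to the limit, but it is worth spelling out since it is the step where the value $\theta_j\vee\theta_{j'}$ (and not $\theta_j\wedge\theta_{j'}$) enters. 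The rest is bookkeeping with the already-established Proposition \ref{p4}(a).
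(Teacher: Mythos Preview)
Your proposal is correct and follows exactly the route the paper intends: the paper's entire ``proof'' is the one-line remark that the corollary ``is derived from (a) of Proposition \ref{p4}'', i.e.\ substitute the explicit form of $\Gamma_{jj'}$ in the two extreme regimes (additive under independence, max under total dependence) into $\chi^{H}_{jj'}=2-\theta_{jj'}\es(\frac{1}{\theta_j},\frac{1}{\theta_{j'}}\di)\Gamma_{jj'}\es(\frac{1}{\theta_j},\frac{1}{\theta_{j'}}\di)$.

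One small caveat: your parenthetical claim that independence of the marginals of $H$ is equivalent to independence of the marginals of $\widehat{H}$ because ``raising a power of a product of marginal factors preserves the product form'' is not a valid justification --- the exponent $\theta_{jj'}(\cdot,\cdot)$ depends on both arguments, so $(\widehat H_j(x)\widehat H_{j'}(y))^{\theta_{jj'}(\tau_j(x),\tau_{j'}(y))}$ need not factor. This does not matter for the argument, however: the sentence immediately preceding the corollary makes clear that the hypothesis is really on $\widehat{H}$ (the ``$H$'' in the corollary statement is a typo), so you can simply drop that equivalence claim and take the hypothesis on $\widehat{H}$ directly.
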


Now we construct some examples that illustrate the cases $\chi_{jj'}^{H}>\chi_{jj'}^{\widehat{H}}$ and $\chi_{jj'}^{H}<\chi_{jj'}^{\widehat{H}}$.\\

\begin{ex}
We first consider the following bivariate M4 process with one moving pattern,
$$
\es\{\begin{array}{c}
X_{n1}=\frac{1}{8}Z_{n-1}\vee \frac{1}{8}Z_{n}\vee\frac{6}{8}Z_{n+1}\\
X_{n2}=\frac{2}{8}Z_{n-1}\vee \frac{1}{8}Z_{n}\vee\frac{5}{8}Z_{n+1},
\end{array}\di.
$$
where $Z_{n}\equiv Z_{1,n}$, $\forall \,n\geq 1$. We have in this case
$$
C_F(u_1,u_2)=\es(u_1^{1/8}\wedge u_2^{2/8}\di)\es(u_1^{1/8}\wedge u_2^{1/8}\di)\es(u_1^{6/8}\wedge u_2^{5/8}\di)
$$
and
$$
\chi^F=\chi^{\widehat{H}}=2-\es(\frac{2}{8}+\frac{1}{8}+\frac{6}{8}\di)=\frac{7}{8}.
$$
Otherwise
$$
H(x_1,x_2)=\exp\es(-\es(\frac{6x_1^{-1}}{8}\vee \frac{5x_2^{-1}}{8}\di)\di).
$$
Therefore, $C_H(u_1,u_2)=u_1\wedge u_2$ and $\chi^H=1>\chi^{\widehat{H}}$.
\end{ex}

\begin{ex}
Now consider a modification in the above example through the introduction of one more pattern,
$$
\es\{\begin{array}{c}
X_{n1}=\frac{1}{8}Z_{1,n}\vee \frac{6}{8}Z_{1,n+1}\vee\frac{1}{8}Z_{2,n}\\
X_{n2}=\frac{1}{8}Z_{1,n}\vee \frac{5}{8}Z_{1,n+1}\vee\frac{2}{8}Z_{2,n}
\end{array}\di.,
$$
We have the same $C_F$ and $\chi^F=\frac{7}{8}$ as in the previous example, but here
$$
H(x_1,x_2)=\exp\es(-\es(\frac{6x_1^{-1}}{8}\vee \frac{5x_2^{-1}}{8}\di)\di)\exp\es(-\es(\frac{x_1^{-1}}{8}\vee \frac{2x_2^{-1}}{8}\di)\di).
$$
and therefore,
$$C_H(u_1,u_2)=\es(u_1^{6/7}\wedge u_2^{5/7}\di)\es(u_1^{1/7}\wedge u_2^{2/7}\di).$$
Then $\chi^H=2-\es(\frac{6}{7}+\frac{2}{7}\di)=\frac{6}{7}<\chi^{\widehat{H}}$.
\end{ex}

%


\begin{thebibliography}{000}

\bibitem{col+99} Coles, S., Heffernan, J., Tawn, J. (1999). Dependence Measures for Extreme Value Analyses. Extremes 2(4), 339-365 

\bibitem{dav82} Davis, R.A. (1982). Limit laws for the maximum and minimum of stationary sequences.
Z. Wahrsch. verw. Gebiete. 61, 31-42

\bibitem{ehl+Sch08} Ehlert, A., Schlather, M. (2008). Capturing the multivariate extremal index: Bounds and interconnections. Extremes 11(4), 353-377

\bibitem{fer+15} Ferreira, H., Pereira, L., Martins, A.P. (2016). Clustering of high values in random fields. Accepted for publication in Extremes

\bibitem{fer+fer15} Ferreira, H., Ferreira, M. (2015). Extremes of scale mixtures of multivariate time series. Journal of Multivariate Analysis 137, 82-99 

\bibitem{fer+fer16} Ferreira, H., Ferreira, M. (2016). Estimating the extremal index through local
dependence. Accepted for publication in Annales de l'Institut Henri Poincar\'{e} 

\bibitem{gom+08} Gomes, M.I., Hall, A., Miranda, M.C. (2008). Subsampling techniques and the Jackknife methodology in the estimation of the extremal index. Computational Statistics \& Data Analysis 52(4), 2022-2041

\bibitem{joe97} Joe, H. (1997). Multivariate Models and Dependence Concepts. Chapman and Hall, London.

\bibitem{lead+83} Leadbetter, M.R., Lindgren, G., Rootzén, H. (1983). Extremes and Related Properties of Random Sequences and Processes, Springer, Berlin.

\bibitem{led+taw96} Ledford, A.W., Tawn, J.A. (1996). Statistics for near independence in multivariate extreme values. Biometrika 83, 169-187

\bibitem{led+taw97} Ledford, A.W., Tawn, J.A. (1997). Modelling dependence within joint tail regions. J. R. Statist. Soc. B 59, 475-499

\bibitem{li09} Li, H. (2009). Orthant tail dependence of multivariate extreme value distributions. Journal of Multivariate Analysis 100(1), 243-256 

\bibitem{mar+fer05} Martins, A.P., Ferreira, H. (2005). The multivariate extremal index and the dependence structure of a multivariate extreme value distribution. TEST 14(2), 433-448

\bibitem{nand94} Nandagopalan, S. (1994). On the multivariate extremal index. J.~of Research, National Inst.~of Standards and Technology 99, 543-550

\bibitem{nav+09} Naveau, P., Guillou, A., Cooley, D. and Diebolt, J. (2009). Modelling
pairwise dependence of maxima in space. Biometrika 96, 1-17

\bibitem{nor15} Northrop, P.J. (2015) An efficient semiparametric maxima estimator of
the extremal index.  Extremes 18(4), 585-603


\bibitem{sib60} Sibuya, M. (1960). Bivariate extreme statistics. Annals of the Institute of Statistical Mathematics 11, 195-210


\end{thebibliography}
\end{document}